\documentclass[12pt]{amsart}
\usepackage{amsmath,amssymb,amsfonts,mathrsfs,mathtools}
\usepackage{enumitem,xcolor,booktabs,array,needspace,longtable}
\usepackage[expansion=false]{microtype}
\usepackage{hyperref}
\usepackage[alphabetic,msc-links]{amsrefs}

\hypersetup{
 colorlinks=false,pdfpagemode=UseNone,
 pdfauthor={Xiaodong Wang and Bo Zhu},
 pdfborder={0 0 1},pdfborderstyle={/S/D/D[3 2]},
 citebordercolor={0 0.55 0},linkbordercolor={0 0.55 0},
 urlbordercolor={0 0.55 0},
 pdftitle={Volume Entropy and Scalar Curvature near a Hyperbolic Metric}}
\usepackage{geometry}
\newtheorem{theorem}{Theorem}[section]
\newtheorem{proposition}[theorem]{Proposition}
\newtheorem{lemma}[theorem]{Lemma}
\newtheorem{corollary}[theorem]{Corollary}
\newtheorem{question}[theorem]{Question}
\theoremstyle{definition}

\newtheorem{remark}[theorem]{Remark}
\numberwithin{equation}{section}
\newcommand{\Met}{\mathcal M}
\newcommand{\TT}{\mathcal T_{\mathrm{TT}}}
\newcommand{\Vol}{\operatorname{Vol}}
\newcommand{\Ric}{\operatorname{Ric}}
\newcommand{\Sc}{\operatorname{Sc}}
\newcommand{\Cov}{\operatorname{Cov}}
\newcommand{\Var}{\operatorname{Var}}
\newcommand{\tr}{\operatorname{tr}}
\newcommand{\Hess}{\operatorname{Hess}}

\newcommand{\cH}{\mathcal H}
\newcommand{\ip}[2]{\langle #1,#2\rangle}
\begin{document}
\title[Volume entropy and scalar curvature]
{Volume Entropy and Scalar Curvature\\
near a Hyperbolic Metric}
\author{Xiaodong Wang}
\address{Department of Mathematics,
Michigan State University, East Lansing, MI 48824, USA}
\email{xwang@math.msu.edu}
\author{Bo Zhu}
\address{Yau Mathematical Sciences Center, Tsinghua University, Beijing, China}
\email{zhub@tsinghua.edu.cn}
\thanks{Bo Zhu is supported by NSFC~12501066.}
\date{October 3, 2026}
\keywords{Volume entropy, scalar curvature, hyperbolic manifold,
transverse-traceless tensor, Codazzi tensor, drawstring deformation}
\begin{abstract}
Let $(M^n,g_0)$ be a closed hyperbolic manifold, $n\geq3$.
We compute the Hessian of volume entropy at $g_0$ under the constraint
$\Sc_g=-n(n-1)$ and express it as a quadratic form on transverse-traceless
symmetric two-tensors. In dimension three, the Hessian is negative
definite, and every sufficiently $C^\infty$-close metric with
$\Sc_g\geq-6$ has entropy at most $2$, with equality precisely for
pullbacks of $g_0$. In dimensions $n\geq4$, a universal spectral
threshold determines the Hessian sign. Nonzero trace-free Codazzi
tensors give positive directions, and hyperbolic bending provides
smooth constrained entropy saddles in every such dimension.
\end{abstract}
\maketitle
\section{Introduction}\label{sec:introduction}
Let $(M^n,g)$ be a closed connected Riemannian manifold, and let
$(\widetilde M,\widetilde g)$ be its universal Riemannian cover.
The volume entropy of $g$ is defined by
\begin{equation*}
 h(g)=\lim_{R\to\infty}\frac1R
 \log\Vol_{\widetilde g}\bigl(B_{\widetilde g}(\widetilde x,R)\bigr).
\end{equation*}
The limit exists, is independent of the basepoint, and equals the
exponential growth rate of a deck orbit. For negatively curved metrics,
it agrees with the topological entropy of the geodesic flow
(see \cite{Manning}).

Suppose that $M$ admits a hyperbolic metric $g_0$ of sectional
curvature $-1$. Then $h(g_0)=n-1$. Bishop--Gromov comparison gives
$h(g)\leq n-1$ whenever $\Ric_g\geq-(n-1)g$, and equality holds
precisely when $g$ is hyperbolic (see \cite{LW}*{Theorem~2}).
Does the same entropy bound hold under a scalar-curvature lower bound?
\begin{equation*}
 \Sc_g\geq-n(n-1)\quad\Longrightarrow\quad h(g)\leq n-1?
\end{equation*}
Together with the minimal-entropy theorem of Besson--Courtois--Gallot
(see \cite{BCG}), this would imply
\begin{equation*}
 \Sc_g\geq-n(n-1)\quad\Longrightarrow\quad
 \Vol(M,g)\geq\Vol(M,g_0).
\end{equation*}
Schoen conjectured this volume comparison in all dimensions
(see \cite{Sch}); in dimension three it follows from Perelman's work on
geometrization by Ricci flow. Agol--Storm--Thurston
(see \cite{AST}*{Conjecture~12.2}) proposed the entropy bound in dimension
three as a route to volume comparison without Ricci flow.
Kazaras--Song--Xu recently disproved that entropy conjecture
(see \cite{KSX}*{Theorem~0.2}).

\begin{theorem}
\label{hd:draw-intro}
Let $(M^3,g_0)$ be a closed hyperbolic three-manifold.
\begin{itemize}
\item There are smooth metrics $g$ arbitrarily close to $g_0$ in the
 tensor $C^0$ topology such that $\Sc_g\geq-6$ and $h(g)>2$.
\item For every $A>0$, there is a smooth metric $g$ such that
 $\Sc_g\geq-6$ and $h(g)>A$.
\end{itemize}
\end{theorem}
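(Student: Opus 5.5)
The plan is a ``drawstring'' construction: tighten a short geodesic loop in the metric so the deck orbit grows faster, and use the fact that in dimension three a thin tube can be pinched while keeping scalar curvature controlled. Fix a closed geodesic $\gamma$ in $(M^3,g_0)$ and a tubular neighbourhood $U\cong S^1\times D^2$ of radius $r$. Inside $U$ we replace $g_0$ by a warped product
\begin{equation*}
 g=\phi(\rho)^2\,dt^2+d\rho^2+\psi(\rho)^2\,d\theta^2 ,
\end{equation*}
with $\phi(\rho)=\ell\cosh\rho$ and $\psi(\rho)=\sinh\rho$ near $\rho=r$, so $g=g_0$ outside $U$. Near the core we take $\phi$ very small, so the core loop has length $\varepsilon\ll\ell$. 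The scalar curvature of such a metric is
\begin{equation*}
 \Sc_g=-2\Bigl(\frac{\phi''}{\phi}+\frac{\psi''}{\psi}+\frac{\phi'\psi'}{\phi\psi}\Bigr).
\end{equation*}
The design task is to choose $\phi,\psi$ so that this is at least $-6$ on the transition region, with $\psi$ satisfying the smoothness conditions at $\rho=0$. The key freedom is that $\psi$ may be chosen concave or very short near the axis. Positive contributions from $-\psi''/\psi$, for example a spherical cap $\psi=\delta\sin(\rho/\delta)$ on a tiny inner disc, can then absorb the negative contributions $-\phi''/\phi$ coming from a rapid increase of $\phi$ from $\varepsilon$ to $\ell\cosh$. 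An alternative I would try first is a connected-sum or Gromov--Lawson type surgery: drill out a neighbourhood of $\gamma$ and reglue a region containing a much shorter loop that bounds nothing new. Codimension-two surgery does not preserve the scalar curvature bound in general, however, so the warped-product ODE route is the one to pursue.

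The second step is to show that the entropy increases. The free homotopy class of $\gamma$ now contains a loop of length $\varepsilon$, while a large part of $M$ keeps its hyperbolic geometry. We estimate $h(g)$ from below by counting words in $\pi_1$ that alternate between powers $[\gamma]^k$ and a fixed set of elements leaving $U$. In $(\widetilde M,\widetilde g)$ each power $\gamma^k$ costs length about $k\varepsilon$ instead of $k\ell$. The free-product-like structure of $\langle\gamma\rangle$ together with a complementary hyperbolic element, obtained by ping-pong in $\mathbb H^3$, gives exponentially many distinct orbit points. Their growth rate tends to $\infty$ as $\varepsilon\to0$ with the geometry outside a fixed neighbourhood unchanged. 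This gives the second bullet: for any $A$, choose $\varepsilon$ small, and $h(g)>A$.

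For the first bullet we shrink the tube radius $r\to0$ while keeping the pinching. The metric then differs from $g_0$ only on a set of small volume, and the coefficient functions $\phi,\psi$ stay uniformly comparable to $\ell\cosh\rho,\sinh\rho$ in the sense of tensors. Achieving $C^0$-closeness is a problem, since $\phi\approx\varepsilon$ is far from $\ell\cosh\rho$. The remedy is to pinch only by a factor $1-\eta$: with $\phi$ only slightly below $g_0$'s value on a thin tube, the metric is $C^0$-close to $g_0$. The entropy gain is then only first order. We need $h(g)>2$ strictly, which must come from a quantitative comparison. Take the Patterson--Sullivan type estimate: decreasing lengths of all geodesics crossing the tube strictly, while never increasing any length, gives $h(g)>h(g_0)$. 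This follows by comparing Poincaré series, since the lengths are pointwise $\le$ and strictly smaller on a set of orbit points of positive density.

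The main obstacle is the first step, namely solving the scalar-curvature ODE inequality through the transition while $\phi$ shrinks. The term $-\phi''/\phi$ wants to be very negative exactly where $\phi$ turns up. I would try to decouple the two by first flaring $\psi$ through a neck region where $\psi''<0$ strongly, and only then letting $\phi$ vary, checking the sign of $\phi'\psi'$ in each region.
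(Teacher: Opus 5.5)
The paper does not prove this statement. It quotes Kazaras--Song--Xu, whose construction modifies $g_0$ in shrinking tubes around a simple closed geodesic and then rescales to restore $\Sc\geq-6$. Your outline has the right overall shape for the second bullet, but it has genuine gaps.

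\textbf{The drawstring is not constructed, and the spherical cap cannot supply the room it needs.} The drawstring is the whole content of the theorem. Put $u=\log\phi$ and $F=\psi'+\psi u'$; this is $\psi$ times the mean curvature of the tori $\{\rho=\mathrm{const}\}$. Your formula is then equivalent to
\[
 \Sc_g\geq-6\iff F'\leq(3-u'^2)\psi .
\]
Smoothness at the axis forces $F(0)=1$. The hyperbolic collar fixes $F$ at the gluing radius, and the hyperbolic profile satisfies the inequality with equality. So $F$ may drop freely but can rise only at rate at most $3\psi$. A spherical cap lowers $\psi'$, hence $F$. That loss can only be repaid by enlarging the slices, which a tiny cap does not do. With exact $\Sc\geq-6$ and exactly hyperbolic boundary data there is no slack in the collar.

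The missing idea is the one in the cited construction. Accept $\Sc\geq-6-\tau$ in the tube, then multiply the whole metric by $1+\tau/6$. This costs only a factor $(1+\tau/6)^{-1/2}$ in entropy.

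\textbf{The word count hides a constraint.} It gives growth rate at least $\log(2N+1)/(C+2D+N\varepsilon)$, where $D$ is the distance from the unchanged region to the short core. You therefore need $D$ to stay bounded as $\varepsilon\to0$, and you never address this. Suppose $\phi$ is raised from $\varepsilon$ along a long neck, for instance one modeled on $d\rho^2+e^{2\sqrt3\rho}dt^2$ times a circle. Then $D\sim\log(1/\varepsilon)/\sqrt3$, and your lower bound stays bounded.

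\textbf{The argument for the first bullet fails.}
\begin{enumerate}
\item Suppose ``never increasing any length'' means $g\leq g_0$ with strict pinching somewhere. Then $\Vol(g)<\Vol(g_0)$, which is incompatible with $\Sc_g\geq-6$ by the three-dimensional volume comparison recalled in the introduction. No such metric exists.
\item Suppose it only means domination of orbit displacements. Strict inequality on a positive-density set of orbit points still does not raise the critical exponent of the Poincar\'e series, because bounded savings leave it unchanged. You would need savings proportional to length on a set carrying the full exponential growth.
\item No first-order gain is available. By Section~\ref{sec:variation}, $Dh_{g_0}(S)=-\frac{n-1}{nV_0}D\Vol_{g_0}(S)$. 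So a first-order entropy gain requires a first-order volume loss, which the same volume comparison forbids. Corollary~\ref{cor:local-entropy-curves} excludes $C^\infty$-small examples altogether.
\end{enumerate}

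The $C^0$-close examples are therefore non-perturbative in $C^2$. One needs a fixed shortening of the core inside a tube of negligible volume. The scalar-curvature defect must be small compared with the entropy gained from the shortened loop, so that the final rescaling does not undo the gain. Your plan contains no such quantitative comparison.
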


Their construction modifies the metric in shrinking tubes around a
simple closed geodesic and then rescales it to restore the scalar
curvature bound. The resulting counterexamples can be $C^0$-close to
$g_0$, but the construction gives no uniform $C^2$ control.
There are also drawstring constructions along codimension-two
submanifolds in higher dimensions (see \cite{KX}).
Kazaras--Song--Xu asked whether the three-dimensional entropy bound
holds for metrics $C^\infty$-close to $g_0$:

\begin{question}\label{q:KSX-local-entropy}
Let $(M^3,g_0)$ be a closed hyperbolic three-manifold. Does there exist
a $C^\infty$ neighborhood $\mathcal U$ of $g_0$ in the space of smooth
Riemannian metrics on $M$ such that
\begin{equation*}
 g\in\mathcal U,\qquad \Sc_g\geq-6
 \quad\Longrightarrow\quad h(g)\leq2?
\end{equation*}
\end{question}
We study volume entropy near $g_0$ under the constraint
$\Sc_g=-n(n-1)$. We compute its Hessian and determine its sign in
every dimension $n\geq3$. This gives an affirmative answer to
Question~\ref{q:KSX-local-entropy}, whereas the analogous local bound
can fail in every dimension $n\geq4$.

Write
\[
 \begin{gathered}
 \Met_{-n(n-1)}=\{g:\Sc_g\equiv-n(n-1)\},\\
 \TT=\{S\in C^\infty(M;S^2T^*M):\tr S=0,\ \delta S=0\}.
 \end{gathered}
\]
Here $g$ ranges over smooth Riemannian metrics, and $\TT$ is the space
of transverse-traceless (TT) tensors. Unless otherwise specified,
operators and inner products are taken with respect to $g_0$.
Our conventions are $\delta S=-\nabla^iS_{ij}$ and
$\Delta=\nabla^*\nabla\geq0$; set $V_0=\Vol(M,g_0)$.
Modulo infinitesimal diffeomorphisms, the tangent space of
$\Met_{-n(n-1)}$ at $g_0$ is $\TT$.
The restricted entropy has a critical point at $g_0$, and its Hessian is
\begin{equation*}
 \begin{split}
 \cH(S,S)&=\frac{n-1}{8}\Cov_0((PS)^\vee,S^\vee)
       -\frac{1}{4nV_0}\ip{PS}{S}_{L^2(M)},\\
 P&=\nabla^*\nabla-2,\qquad S^\vee(x,v)=S_x(v,v).
 \end{split}
\end{equation*}
Here $\Cov_0$ is covariance along the hyperbolic geodesic flow with
respect to the Liouville probability measure $m_0$ on $SM=S_{g_0}M$.
We derive this formula from
Flaminio's fixed-volume entropy variation (see \cite{Flaminio}).
This formula holds in every dimension $n\geq3$.

Covariance is defined for negatively curved geodesic flows, but in
general it is difficult to evaluate explicitly
(see \cite{Gui}, \cite{GKL}, and \cite{Hum}). In the real hyperbolic case, we obtain
the following formula.

\begin{theorem}\label{thm:intro-covariance}
If $0\ne S\in\TT$ satisfies $\Delta S=\lambda S$, then
\[
 \Cov_0(S^\vee,S^\vee)=\kappa_n(\lambda)
       \|S^\vee\|_{L^2(SM,m_0)}^2,
\]
where
\begin{equation*}
 \kappa_n(\lambda)=\frac{J(a+\mu)J(a-\mu)}{J(n+3)},
 \qquad a=\frac{n+3}{2},
\end{equation*}
and
\[
 J(z)=\sqrt\pi\,\frac{\Gamma(z/2)}{\Gamma((z+1)/2)},\qquad
 \mu^2=\frac{(n-1)^2+8}{4}-\lambda.
\]
We choose $\mu\geq0$ when $\mu^2\geq0$, and $\mu$ with positive
imaginary part otherwise; the formula is unchanged by $\mu\mapsto-\mu$.
\end{theorem}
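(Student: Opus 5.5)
We compute the covariance through the Fourier–spectral side of the geodesic flow, using harmonic analysis on $\mathbb H^n$ and $SM=\Gamma\backslash G/M'$ with $G=SO_0(n,1)$. Recall that $\Cov_0(f,f)=\int_{-\infty}^{\infty}\langle f\circ\varphi_t,f\rangle_{L^2(m_0)}\,dt$ for $f$ of mean zero. A TT tensor $S$ has $S^\vee$ of mean zero on each fiber, because $\tr S=0$. So we need the correlation function $C(t)=\langle S^\vee\circ\varphi_t,S^\vee\rangle$. Since $S$ is a $\Delta$-eigentensor, $S^\vee$ lies in a single irreducible $G$-subrepresentation of $L^2(\Gamma\backslash G)$: a principal or complementary series representation induced from the spherical-type $K$-type of degree-2 traceless symmetric tensors, with parameter determined by $\lambda$. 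The quantity $\mu$ is the spectral parameter, and the shift $(n-1)^2/4+2$ reflects the Weitzenböck term for symmetric $2$-tensors on $\mathbb H^n$.

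\textbf{Step 1 (reduction to a matrix coefficient).} We write $C(t)=\|S^\vee\|^2\,\phi_\mu(a_t)$, where $\phi_\mu$ is the normalized matrix coefficient between the $M'$-invariant vector $v_0$ (the function $v\mapsto S(v,v)$ on the fiber) and its translate by the Cartan flow $a_t$. This is the standard fact that the geodesic flow is right translation by $A$.

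\textbf{Step 2 (explicit coefficient).} We compute $\phi_\mu(a_t)$ as an integral over the boundary sphere $S^{n-1}=K/M'$ in the compact picture. In that picture $v_0$ corresponds to the zonal function $P(\theta)=\cos^2\theta-\tfrac1n$ on the sphere, paired with the Poisson-type kernel $(\cosh t-\sinh t\cos\theta)^{-(n-1)/2-\mu}$. Equivalently, we solve the ODE in $t$ coming from the Casimir acting on the $M'$-fixed vector. We prefer the direct integral:
\[
\phi_\mu(a_t)=c\int_0^\pi(\cosh t-\sinh t\cos\theta)^{-\frac{n-1}{2}-\mu}\,P(\theta)P_t(\theta)\sin^{n-2}\theta\,d\theta,
\]
where $P_t$ is the transported degree-2 harmonic.

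\textbf{Step 3 (time integral).} We integrate in $t$ over $\mathbb R$ before integrating in $\theta$. Substituting $s=e^{t}$, the $t$-integral of powers of $(\cosh t-\sinh t\cos\theta)$ becomes a Beta integral. The remaining $\theta$-integrals are of the form $\int_0^\pi\sin^{k}\theta\,d\theta=J(k+1)$. Matching exponents should give the product $J(a+\mu)J(a-\mu)$, with $a=(n+3)/2$ coming from $n-1$ plus the $4$ extra powers of sine from the degree-2 harmonic. The factor $J(n+3)$ arises from normalizing $\|v_0\|^2=\int P^2\sin^{n-2}\theta$. The symmetry under $\mu\mapsto-\mu$ comes from the intertwining operator, that is, from $t\mapsto-t$.

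The main obstacle is Step 2: obtaining a tractable closed form of the nonspherical matrix coefficient for the degree-2 $K$-type, and checking convergence of the $t$-integral in the complementary-series range, where $\mu^2>0$ but $\mu<a$. Here $\mu<a$ holds because $\lambda>0$ forces $\mu^2<((n-1)^2+8)/4<a^2$. If the direct integral proves too messy, we would instead use the fiberwise decomposition of $S^\vee\circ\varphi_t$ via Jacobi fields. In constant curvature $-1$, we have $d\varphi_t(v)$ explicitly, so $S^\vee(\varphi_t v)$ can be written as an integral of $S$ along the geodesic. The correlation then reduces to averaging the spherical function of $S$ along geodesic spheres, and that spherical function satisfies a hypergeometric ODE whose solution is explicit in $\mu$. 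Integrating the hypergeometric function against $\sinh^{n-1}r$ yields the Gamma-function ratio.
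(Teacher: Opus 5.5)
There is a genuine gap, and it is in Step 2: you compute the wrong matrix coefficient.

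Your integrand consists of the scalar zonal harmonic $P(\theta)=\cos^2\theta-\frac1n$ on $S^{n-1}$, transported by the boundary action and weighted by $(\cosh t-\sinh t\cos\theta)^{-\frac{n-1}{2}-\mu}$. That is the $M'$-fixed matrix coefficient of the degree-two $K$-type in the \emph{spherical} (scalar) principal series. Equivalently, it is the correlation function of $(\Hess f+\frac1n(\Delta f)g_0)^\vee$ for a Laplace eigenfunction $f$ with $\Delta f=(\frac{(n-1)^2}{4}-\mu^2)f$. Such tensors are trace-free but not divergence-free; their divergence is a nonzero multiple of $df$. Nothing in your Step 2 uses $\delta S=0$, so it cannot single out TT tensors. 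A TT eigentensor instead generates a representation induced from the nontrivial $M'$-type of trace-free symmetric two-tensors on $\mathbb R^{n-1}$. In the compact picture its vectors are bundle-valued sections over $S^{n-1}$, and the matrix coefficient involves the $M'$-valued cocycle of the boundary action, not only the conformal factor.

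This changes the answer, not just a normalization. Carry out your Step 3 on your Step 2 integrand, with $P_t(\theta)=P(\theta')$ and $\cos\theta'=(\cosh t\cos\theta-\sinh t)/(\cosh t-\sinh t\cos\theta)$, the transport matching the kernel. The double integral then equals $\frac1{n^2}J(a+\mu)J(a-\mu)$, while $\int_0^\pi P^2\sin^{n-2}\theta\,d\theta=\frac{2}{n(n+1)}J(n+3)$. The normalized result is therefore $\frac{n+1}{2n}\kappa_n(\lambda)$, not $\kappa_n(\lambda)$. For example, at $n=3$, $\lambda=3$ you would get $5\pi^2/32$ instead of $15\pi^2/64$.

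Your fallback points the right way but omits the crux, and it uses the wrong measure. The covariance is $\int_{\mathbb R}\varphi(t)\,dt$ along the flow, with no $\sinh^{n-1}r$ weight. The missing idea is the paper's Lemma~\ref{hd:polar-laplace}:
\begin{itemize}
\item For the radial component $U=S(\partial_r,\partial_r)$, the condition $\delta S=0$ eliminates $\operatorname{div}_\sigma W$. This turns the drift $(n-1)\coth r$ into $(n+3)\coth r$.
\item Pairing with the degree-two harmonic $A_x(\xi,\xi)$ then gives $\varphi''+(n+3)\coth r\,\varphi'+(\lambda+2n)\varphi=0$, the radial spherical-function equation of $\mathbb H^{n+4}$ with parameter $\mu$.
\item Its regular solution is $\frac1{J(n+3)}\int_0^\pi(\cosh t-\sinh t\cos\theta)^{-a-\mu}\sin^{n+2}\theta\,d\theta$. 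This is a single kernel with exponent $-\frac{n+3}{2}-\mu$ and weight $\sin^{n+2}\theta$.
\end{itemize}
Your Step 3 mechanism is sound and is exactly the paper's Fubini argument. Integrate in $t$ first using $\cosh t-u\sinh t=\sqrt{1-u^2}\cosh(t-\operatorname{artanh}u)$, then do a Beta integral in $u=\cos\theta$. Applied to this correct integrand, it yields $J(a+\mu)J(a-\mu)/J(n+3)$ exactly.
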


Substitution into the Hessian formula gives the following criterion.
\begin{theorem}\label{hd:main}
For every $n\geq3$, the constrained Hessian at $g_0$ is diagonal in an
$L^2$-orthogonal TT eigenbasis of $\Delta$. If $0\ne S\in\TT$ and $\Delta S=\lambda S$,
then $\lambda\geq n$ and
\begin{equation}\label{hd:main-mode}
 \cH(S,S)=\frac{(\lambda-2)(n-1)}{8}
 \bigl(\kappa_n(\lambda)-\frac{n+2}{n-1}\bigr)
 \|S^\vee\|_{L^2(SM,m_0)}^2.
\end{equation}
The function $\kappa_n$ is strictly decreasing on
$[n,\infty)$ and tends to zero.

\begin{itemize}
\item For $n=3$, $\cH$ is negative definite on $\TT$.
\item For every $n\geq4$, there is exactly one $\lambda_*(n)>n$ satisfying
\begin{equation*}
 \kappa_n(\lambda_*(n))=\frac{n+2}{n-1}.
\end{equation*}
The positive space of $\cH$ is the sum of the TT eigenspaces with
$n\leq\lambda<\lambda_*(n)$; its kernel is the eigenspace at
$\lambda_*(n)$; and its negative space is the sum of those above it.
The positive space and kernel are finite-dimensional, and the negative
space is infinite-dimensional.
\end{itemize}
\end{theorem}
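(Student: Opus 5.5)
The plan is to substitute Theorem~\ref{thm:intro-covariance} into the Hessian formula, after reducing $P$ to a scalar on each eigenspace.

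\textbf{Diagonalization and the mode formula.} On $\TT$ we have $P=\Delta-2$, which commutes with $\Delta$, so $PS=(\lambda-2)S$ whenever $\Delta S=\lambda S$. Substituting into the Hessian formula gives
\[
\cH(S,S)=(\lambda-2)\Bigl[\tfrac{n-1}{8}\Cov_0(S^\vee,S^\vee)-\tfrac1{4nV_0}\|S\|_{L^2(M)}^2\Bigr].
\]
Averaging $S_x(v,v)^2$ over the unit sphere for traceless $S$ gives $\int_{S_xM}(S^\vee)^2=\frac{2}{n(n+2)}|S|^2$, with normalized measure. Hence
\[
\|S^\vee\|^2_{L^2(m_0)}=\frac{2}{n(n+2)V_0}\|S\|^2_{L^2(M)}.
\]
This turns the second term into $\frac{n+2}{8}\|S^\vee\|^2$. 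Using Theorem~\ref{thm:intro-covariance} for the first term gives \eqref{hd:main-mode}.

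For diagonality, take distinct eigentensors $S_1,S_2$. Then $\langle PS_1,S_2\rangle_{L^2}=0$, and the fibre identity above, polarized, gives $\langle S_1^\vee,S_2^\vee\rangle_{L^2(m_0)}=0$. To see that the covariance cross term vanishes, I would use that $\Cov_0$ is invariant under the $\mathrm{SO}(n,1)$-representation-theoretic decomposition: the map $S\mapsto S^\vee$ intertwines the Laplacian on TT tensors with the Casimir on $L^2(\Gamma\backslash G/M)$. Distinct Casimir eigenvalues give orthogonal irreducible components, and these are preserved by the geodesic flow. (I expect the paper's proof of Theorem~\ref{thm:intro-covariance} to already provide this; otherwise, the polarized covariance is computed via spectral measures concentrated on the isotypic component.)

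\textbf{The bound $\lambda\ge n$.} For TT tensors on a hyperbolic manifold, I would use the Weitzenböck formula for the Codazzi operator $(dS)_{ijk}=\nabla_iS_{jk}-\nabla_jS_{ik}$:
\[
\|dS\|^2=2\langle\nabla^*\nabla S,S\rangle-2n\|S\|^2 \quad\text{for } S\in\TT,
\]
up to checking constants with $\Ric=-(n-1)$ and $R_{ikjl}S^{kl}=S_{ij}$ for traceless $S$. This gives $\lambda\ge n$, with equality exactly for trace-free Codazzi tensors.

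\textbf{Monotonicity and the sign.} Set $a=\frac{n+3}{2}$ and $\mu^2=\frac{(n-1)^2+8}{4}-\lambda$.
\begin{itemize}
\item At $\lambda=n$: $\mu^2=\frac{(n-3)^2}{4}$, so $\mu=\frac{n-3}{2}$ and $a\pm\mu\in\{n,3\}$. Thus $\kappa_n(n)=J(n)J(3)/J(n+3)$.
\item For $\lambda$ with $\mu$ real, monotonicity follows from log-convexity of $J$: $\log J(a+\mu)+\log J(a-\mu)$ is increasing in $\mu$, since $\psi$-differences are monotone ($\log J$ is convex because $\Gamma(z/2)/\Gamma((z+1)/2)$ is log-convex).
\item For imaginary $\mu=it$, $\kappa_n=|J(a+it)|^2/J(n+3)$. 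This is decreasing in $t$ via the product formula $|\Gamma(x+iy)/\Gamma(x)|^2=\prod(1+y^2/(x+k)^2)^{-1}$, applied to the ratio. Stirling's asymptotics give $|J(a+it)|^2\sim 2\pi/t\to0$.
\end{itemize}
Strict decrease plus decay to zero means the threshold $\frac{n+2}{n-1}$ is crossed at most once. For $n=3$, $\kappa_3(3)=J(3)^2/J(6)$; with $J(3)=\pi/2$ and $J(6)=16/15$... I will compute directly and verify that $\kappa_3(3)<\frac52$, giving negative definiteness since $\lambda-2>0$. For $n\ge4$, I must show $\kappa_n(n)>\frac{n+2}{n-1}$, which is a Gamma-ratio inequality in $n$. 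I expect this to be the main obstacle, along with the delicate fact that $\lambda_*>n$ strictly.

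\textbf{Dimensions of the eigenspaces.} Finite dimensionality of the positive space and kernel follows from the discreteness of the spectrum of $\Delta$ on $\TT$. The negative space is infinite-dimensional because $\TT$ is infinite-dimensional (elliptic theory on the TT part) while only finitely many eigenvalues lie below $\lambda_*$.
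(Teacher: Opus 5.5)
\textbf{Gap: the inequality at $\lambda=n$ for $n\ge4$.} You flag this step and then leave it open: for $n\ge4$ you never prove
\[
K_n:=\kappa_n(n)=\frac{J(3)J(n)}{J(n+3)}>\frac{n+2}{n-1}.
\]
This inequality is what separates $n\ge4$ from $n=3$. If it failed, strict monotonicity would make every TT mode nonpositive, and there would be no $\lambda_*(n)>n$ and no positive directions. The rest of your outline is sound:
\begin{itemize}
\item the mode formula from $PS=(\lambda-2)S$ and the fibre identity;
\item the Codazzi--Bochner bound, which is the paper's Proposition~\ref{hd:bochner};
\item monotonicity via log-convexity of $J$ for real $\mu$ and the Gamma product for imaginary $\mu$, a valid variant of the paper's single Weierstrass product in $z=\mu^2$;
\item Stirling for the decay;
\item $\kappa_3(3)=15\pi^2/64<5/2$, which your values $J(3)=\pi/2$, $J(6)=16/15$ already settle.
\end{itemize}
Your Casimir argument for vanishing cross-covariances also works. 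The paper gets it more cheaply: Proposition~\ref{pro:correlation} allows an arbitrary trace-free $A$ in the first slot, so $\Cov_0(A^\vee,S^\vee)=\kappa_n(\lambda)\int_{SM}A^\vee S^\vee\,dm_0$. By \eqref{hd:fiber} this vanishes whenever $A\perp S$ in $L^2(M)$, including inside a single eigenspace.

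\textbf{How to close it.} Use the two-step recurrence $J(z+2)=\frac{z}{z+1}J(z)$, which follows from $\Gamma(w+1)=w\Gamma(w)$. It gives $K_{n+2}/K_n=\frac{n(n+4)}{(n+1)(n+3)}$. Setting $R_n=\frac{n-1}{n+2}K_n$, one gets
\[
\frac{R_{n+2}}{R_n}=\frac{n(n+2)}{(n-1)(n+3)}>1 .
\]
The base values are
\begin{itemize}
\item $R_4=\frac{16}{15}$, from $J(4)=\frac43$ and $J(7)=\frac{5\pi}{16}$;
\item $R_5=\frac{15\pi^2}{128}$, from $J(5)=\frac{3\pi}{8}$ and $J(8)=\frac{32}{35}$.
\end{itemize}
Both exceed $1$, so induction along each parity class gives $R_n>1$ for all $n\ge4$.

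Once this strict inequality holds, $\lambda_*(n)>n$ is not a separate delicate point. Continuity, strict decrease, and $\kappa_n\to0$ give exactly one crossing, by the intermediate value theorem, and it lies strictly above $n$.

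\textbf{A smaller point.} To identify the positive space, kernel and negative space of $\cH$ with sums of eigenspaces, pairwise vanishing of cross terms only handles finite combinations. You also need the expansion $S=\sum_j s_jS_j$ to converge in a norm in which $\cH$ is continuous. The paper uses convergence in every Sobolev norm together with H\"older continuity of $\Cov_0$. This yields
\[
\cH(S,S)=\frac{1}{4n(n+2)V_0}\sum_j(\lambda_j-2)\bigl((n-1)\kappa_n(\lambda_j)-(n+2)\bigr)s_j^2 ,
\]
from which all the sign and dimension claims follow.
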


In dimension three, the negative Hessian satisfies a uniform $H^1$
bound. Controlling the nonlinear remainder in one neighborhood of
$g_0$ and then applying conformal normalization gives the local
entropy bound and rigidity.
\begin{corollary}
\label{cor:local-entropy-curves}
Let $(M^3,g_0)$ be a closed hyperbolic three-manifold. There is a
$C^\infty$ neighborhood $\mathcal U$ of $g_0$ such that every smooth
metric $g\in\mathcal U$ satisfies
\[
 \Sc_g\geq-6\quad\Longrightarrow\quad h(g)\leq2.
\]
Equality holds if and only if $g=\varphi^*g_0$ for a diffeomorphism
$\varphi$ of $M$.
\end{corollary}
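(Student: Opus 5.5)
We will prove the corollary in three stages: a local analysis on the constraint manifold $\Met_{-6}$, a conformal reduction from $\Sc_g\geq-6$ to $\Sc_g\equiv-6$, and a rigidity argument.

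\emph{Stage 1: slice and second-order expansion.} By Ebin's slice theorem, after replacing $g$ by $\varphi^*g$ for a diffeomorphism $\varphi$ close to the identity (this changes neither $h$ nor $\Sc$), we may assume $g$ lies in a slice through $g_0$. Near $g_0$, $\Met_{-6}$ is a smooth Banach submanifold; the implicit function theorem applies because $-\Delta-\tfrac{n(n-1)}{n-1}\cdot\ldots$, i.e. the linearized scalar curvature on conformal directions, is invertible at a hyperbolic metric. Its intersection with the slice is then a graph $S\mapsto g_S=g_0+S+Q(S)$ over a neighborhood of $0$ in $\TT$, with $\|Q(S)\|\lesssim\|S\|^2$ in suitable norms.

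The entropy is $C^3$ (indeed real-analytic) in $g$ near $g_0$ in a $C^k$ topology, by structural stability and smooth dependence of the Anosov flow and its equilibrium states (Katok--Knieper--Pollicott--Weiss). Since $g_0$ is critical, Taylor's formula gives
\[
h(g_S)=2+\cH(S,S)+R(S),\qquad |R(S)|\leq C\|S\|_{C^k}\|S\|_{H^1}^2 .
\]

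\emph{Stage 2: coercivity and absorbing the remainder.} For $n=3$, Theorem~\ref{hd:main} gives
\[
\cH(S,S)=\tfrac{(\lambda-2)}{4}\bigl(\kappa_3(\lambda)-\tfrac52\bigr)\|S^\vee\|^2 .
\]
Since $\kappa_3$ is decreasing, $\kappa_3(\lambda)-\tfrac52\leq\kappa_3(3)-\tfrac52<0$, and $\kappa_3\to0$. Also $\|S^\vee\|_{L^2(SM)}^2=c_n\|S\|_{L^2}^2$ for trace-free $S$. Together these give
\[
\cH(S,S)\leq -c\sum_\lambda(\lambda-2)\|S_\lambda\|^2\leq -c'\|S\|_{H^1}^2 ,
\]
which is the uniform $H^1$ bound. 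The main obstacle is the remainder estimate: we must show the third-order term of $h$ is controlled by $\|S\|_{C^k}\|S\|_{H^1}^2$ rather than $\|S\|_{C^k}^3$. We would first try to obtain this from Flaminio's formula. The first variation of $h$ is an integral of $\dot g^\vee$ against the Bowen--Margulis/Liouville-type measure of $g$. Differentiating once more along the segment from $g_0$ to $g_S$ yields covariance-type expressions bilinear in $S^\vee$, with coefficients depending smoothly on $g$ in $C^k$. Their difference from the $g_0$ values is $O(\|S\|_{C^k})$ as operators bounded by the $H^1$-type quadratic form. Here we use that the correlations are controlled by anisotropic Sobolev norms, uniformly near $g_0$. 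Once this bound holds, shrinking $\mathcal U$ makes $\|S\|_{C^k}$ small, so $h(g_S)\leq 2-\tfrac{c'}{2}\|S\|_{H^1}^2$.

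\emph{Stage 3: conformal normalization.} Given $g\in\mathcal U$ with $\Sc_g\geq-6$, solve the Yamabe-type equation for $u>0$ close to $1$ with $\Sc_{u^4g}\equiv-6$. This is possible by the implicit function theorem, since the relevant linearization is $8\Delta+\ldots$ and is invertible. The maximum principle at the minimum of $u$, using $\Sc_g\geq-6$, gives $u\geq1$, so $u^4g\geq g$ and hence $h(g)\geq h(u^4g)$ is wrong in direction. We correct this: distances increase under $u^4g\geq g$, so balls shrink in the rescaled metric measured appropriately. Since $h(\lambda g)=\lambda^{-1/2}h(g)$ and $g\leq u^4g$ pointwise, orbit growth gives $h(g)\leq \max u^2\cdot h(u^4 g)$. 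This is insufficient as it stands, so we instead argue that $h(u^4g)\geq h(g)$ directly: lengths of deck-orbit displacements in $u^4g$ dominate those in $g$, which gives $h(u^4g)\leq h(g)$, also the wrong direction.

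We therefore reverse the normalization and use $u\leq1$. Writing $\Sc_{u^4g}=u^{-5}(8\Delta u+\Sc_g u)=-6$, at the maximum of $u$ we have $\Delta u\geq0$ with the paper's sign, so
\[
-6u^5=8\Delta u+\Sc_g u\geq\Sc_g u\geq-6u .
\]
This gives $u^4\leq1$, hence $u^4g\leq g$ and $h(g)\leq h(u^4g)\leq2$ by Stage 2.

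\emph{Equality.} If $h(g)=2$, then both inequalities are equalities. The Stage 2 bound forces $S=0$, so $u^4g$ is isometric to $g_0$. Equality in $h(g)\leq h(u^4g)$ with $u\leq1$ then forces $u\equiv1$: if $u<1$ somewhere, the strict inequality follows from Flaminio's first variation, which pairs against a measure of full support. Hence $g=\varphi^*g_0$.
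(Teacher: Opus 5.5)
Your overall architecture is the paper's. You use gauge-fixed coordinates on $\Met_{-6}$ over $\TT$, the coercive bound $\cH(S,S)\le -c\|S\|_{H^1}^2$ from $\kappa_3(\lambda)\le 15\pi^2/64<5/2$, absorption of a Taylor remainder, and the normalization $u^4g$ with $u\le1$. Your final version of Stage~3 is exactly the paper's maximum-principle argument; the earlier claim that the minimum of $u$ yields $u\ge1$ is false and should be deleted.

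The genuine gap is the remainder bound $|R(S)|\le C\|S\|_{C^k}\|S\|_{H^1}^2$. You correctly identify it as the crux but do not establish it. Your route requires the second derivative of $h$ at every nearby non-hyperbolic metric to be a quadratic form bounded on $H^1$, depending Lipschitz-continuously on the base point. Nothing available supplies this. Flaminio's formula is a statement at $g_0$, where the Bowen--Margulis measure is the Liouville measure. At a nearby metric, the second variation involves covariance against the measure of maximal entropy, which is in general not the Liouville measure. It also involves derivatives of the structural-stability time change, which lose derivatives. Appealing to anisotropic Sobolev control uniformly near $g_0$ is therefore an unproved, and hard, claim rather than a step.

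The missing idea is that no uniform analysis of $D^2h$ is needed: interpolation turns the crude cubic remainder into the required form. In dimension three $H^8\hookrightarrow C^{5,1/2}$, so $F(S)=h(G(S))$ is $C^3$ on $\TT^8$ by the Katok--Knieper--Pollicott--Weiss regularity. Taylor's formula then gives $|F(S)-2-\tfrac12\cH(S,S)|\le C\|S\|_{H^8}^3$; note the factor $\tfrac12$ missing from your expansion. Since $8=\tfrac23\cdot1+\tfrac13\cdot22$, H\"older's inequality on the TT spectral coefficients gives
\[
 \|S\|_{H^8}^3\le\|S\|_{H^1}^2\,\|S\|_{H^{22}}.
\]
$C^\infty$-closeness of $g$ to $g_0$ makes the gauge-fixed coordinate small in $H^{22}$, which is precisely where the $C^\infty$ neighborhood enters. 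With this replacement your argument closes.

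Your equality argument is a valid alternative to the paper's, which uses the strong maximum principle for $1-u$ and then $h(g)\le2\rho^2$ with $\rho=\max u<1$. Two corrections are needed. First, it rests on the Katok--Knieper--Weiss first variation, not Flaminio's formula. Second, it needs a monotonicity step you left implicit. Along $g_t=u^4g+t(1-u^4)g$ the metrics increase, so $t\mapsto h(g_t)$ is non-increasing. Its derivative at $t=0$ is strictly negative because the Liouville measure of the hyperbolic metric $u^4g$ has full support. Hence $h(g)\le h(g_\varepsilon)<2$.
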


In dimensions $n\geq4$, a positive direction exists precisely when
the TT spectrum meets $[n,\lambda_*(n))$. A symmetric tensor $S$ is
\emph{Codazzi} if $\nabla_iS_{jk}=\nabla_jS_{ik}$. Trace-free
Codazzi tensors are exactly the TT eigentensors with $\lambda=n$
(Proposition~\ref{hd:bochner}), so every nonzero such tensor gives a
positive direction.

\Needspace{8\baselineskip}
\begin{corollary}
\label{hd:saddle-intro}
If $n\geq4$ and $(M,g_0)$ admits a nonzero trace-free Codazzi tensor,
then $g_0$ is a saddle to second order on
$\mathcal M_{-n(n-1)}$ modulo diffeomorphisms. There are smooth curves
$g_s^+,g_s^-$ in this space through $g_0$ for which, for sufficiently
small nonzero $s$,
\[
 h(g_s^+)>n-1,\qquad h(g_s^-)<n-1.
\]
Such closed hyperbolic manifolds exist in every dimension $n\geq4$.
The curves converge smoothly to $g_0$ as $s\to0$.
\end{corollary}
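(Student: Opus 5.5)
Given a nonzero trace-free Codazzi tensor $S_0$, Proposition~\ref{hd:bochner} says $S_0\in\TT$ with $\Delta S_0=nS_0$. Since $n<\lambda_*(n)$, Theorem~\ref{hd:main} gives $\cH(S_0,S_0)>0$. For a negative direction, pick any TT eigentensor $S_1$ whose eigenvalue exceeds $\lambda_*(n)$. One exists because the TT spectrum is discrete and the negative space of $\cH$ is infinite-dimensional. Then $\cH(S_1,S_1)<0$. Together these show that $g_0$ is a saddle to second order on $\Met_{-n(n-1)}$ modulo diffeomorphisms. Diffeomorphism directions do not affect the conclusion, because $h$ is diffeomorphism invariant and $\TT$ is a slice.

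To produce the curves, I would realize each tangent direction $S\in\{S_0,S_1\}$ by a smooth curve in $\Met_{-n(n-1)}$. Start from $\tilde g_s=g_0+sS$. The linearized scalar curvature at $g_0$ on $\TT$ is $-\Delta\tr S+\delta\delta S-\ip{\Ric}{S}=0$. Then solve $\Sc(e^{2u}\tilde g_s)=-n(n-1)$ for a conformal factor $u_s$ by the implicit function theorem in $C^{k,\alpha}$. The linearization of $u\mapsto\Sc(e^{2u}g_0)$ at $u=0$ is $-2(n-1)\Delta u+2n(n-1)u$ (up to convention), which is invertible because $\Delta\geq0$. The resulting $g_s=e^{2u_s}\tilde g_s$ depends smoothly on $s$, satisfies $u_s=O(s^2)$, and converges smoothly to $g_0$ by elliptic regularity. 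Since $g_0$ is critical for the restricted entropy and $\dot g_0=S$, we get
\[
h(g_s)=n-1+\tfrac{s^2}{2}\cH(S,S)+o(s^2).
\]
For the expansion to be valid, $s\mapsto h(g_s)$ must be $C^2$ near $0$. For negatively curved metrics, $h$ is the topological entropy of an Anosov flow, and this is smooth in the metric by structural stability (Katok--Knieper--Pollicott--Weiss). Here the $\ddot g_0$ contribution is absorbed by the constrained Hessian formula; equivalently, it vanishes by criticality modulo the constraint. Taking $S=S_0$ gives $g_s^+$ and $S=S_1$ gives $g_s^-$.

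The remaining claim is existence in every dimension $n\geq4$. I would use hyperbolic bending, following Johnson--Millson. Take an arithmetic closed hyperbolic manifold of simplest type that contains a totally geodesic embedded closed hypersurface $\Sigma$; these exist in all dimensions. Bending along $\Sigma$ gives a nontrivial first-order deformation of the flat conformal structure, i.e.\ a class in $H^1(M;\mathfrak{so}(n+1,1))$ that is not tangent to hyperbolic structures, which are rigid by Mostow. By Lafontaine's correspondence, infinitesimal conformally flat deformations modulo trivial ones correspond in the hyperbolic case to trace-free Codazzi tensors (equivalently $\ker(\Delta-n)$ on $\TT$). The nonvanishing of Johnson--Millson's class therefore yields a nonzero trace-free Codazzi tensor.

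I expect this identification to be the main obstacle. I would try first to construct the tensor directly: solve $\nabla^*\nabla S-2S=(n-2)S$ with a source given by the bending cocycle, i.e.\ take the harmonic representative of the bending class in the TT gauge. The smoothness of $h$ along the curves is the other delicate point, but it is citable.
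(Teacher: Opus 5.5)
Your proposal is correct and matches the paper's proof step for step. The Codazzi mode at $\lambda=n<\lambda_*(n)$ gives the positive direction and an eigenmode above $\lambda_*(n)$ gives the negative one. Both are integrated to constrained curves by the implicit function theorem, and the expansion $h(g_s)=n-1+\tfrac{s^2}{2}\cH(S,S)+o(s^2)$ finishes the argument. In that integration step the paper corrects by $u(s)g_0$ rather than by a conformal factor, which makes no difference. Note also that in the paper's convention the linearized operator is $2(n-1)(\nabla^*\nabla+n)$: your displayed formula and your justification use opposite sign conventions for $\Delta$, though the operator is invertible either way.

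The identification you flag as the main obstacle is Theorem~\ref{hd:flat-bundle}, which the paper proves directly rather than citing Lafontaine. Weil's infinitesimal rigidity (not Mostow) reduces $\mathfrak{so}(n+1,1)$ coefficients to $\mathbb R^{n,1}$. For the flat connection $D$ on $TM\oplus\mathbb R$, one then subtracts $D(b^\sharp,0)$ and afterwards $D(\nabla f,-f)$, with $-(\Delta+n)f=\tr A$. This gives each class a unique trace-free symmetric Codazzi representative, so your fallback of solving an eigen-equation with a source is not needed.
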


These manifolds are obtained by the bending construction of
Johnson--Millson (see \cite{JM}); the passage from its cohomology classes to
Codazzi tensors is proved in Section~\ref{hd:codazzi-section}.

Section~\ref{sec:variation} recalls
the entropy variation formulas. Section~\ref{sec:constraint} develops
the scalar-curvature constraint and the TT spectral lower bound, and
Section~\ref{sec:hessian} derives the constrained Hessian.
Sections~\ref{sec:xray}--\ref{sec:criterion} describe the geodesic
X-ray normal operator and the eigenmode criterion.
Sections~\ref{hd:radial-section}--\ref{hd:multiplier-section} compute
the covariance multiplier and its sign threshold.
Section~\ref{sec:three} proves the local entropy bound in dimension
three, and Section~\ref{hd:codazzi-section} constructs the
higher-dimensional saddles.

\medskip
\noindent\textbf{AI Declaration Statement.}
At an early stage, the authors used ChatGPT to perform certain calculations. The output was verified and rewritten by the authors. 

\section{First and second variations of entropy}\label{sec:variation}
Let $(M^n,g)$ be a closed negatively curved manifold, $n\geq3$, and
let $g_s$ be a smooth curve through $g$ with initial velocity $S$.
Volume entropy equals the topological entropy of the geodesic flow.
The first-variation formula of Katok--Knieper--Weiss
(see \cite{KKW}*{Theorem~3}) gives
\begin{equation*}
 \left.\frac{d}{ds}\right|_{s=0}h(g_s)
 =-\frac{h(g)}2\int_{S_gM}S_x(v,v)\,d\mu_g(x,v),
\end{equation*}
where $\mu_g$ is the Bowen--Margulis probability measure, the unique
invariant probability measure of maximal entropy.
Entropy depends smoothly on smooth metric families
(see \cite{KKPW}); the Banach-space regularity needed below is
specified in Lemma~\ref{lem:entropy-remainder}.

From now on, $g_0$ is hyperbolic. Let $SM=S_{g_0}M$, let $\phi_t$
be its geodesic flow, and let $m_0$ be its Liouville probability
measure. At $g_0$, the Bowen--Margulis measure equals $m_0$.
For a symmetric two-tensor $A$, write $A^\vee(x,v)=A_x(v,v)$.
Fiber integration and the volume variation give
\begin{equation*}
 \begin{aligned}
 \int_{SM}A^\vee\,dm_0
 &=\frac{1}{nV_0}\int_M\tr A\,dV_{g_0},\\
 Dh_{g_0}(S)
 &=-\frac{n-1}{2nV_0}\int_M\tr S\,dV_{g_0}
 =-\frac{n-1}{nV_0}D\Vol_{g_0}(S).
 \end{aligned}
\end{equation*}
Thus $g_0$ is critical for entropy at fixed volume.

For a function $f$ on $SM$, write $\overline f=\int_{SM}f\,dm_0$.  For
two H\"older functions $f_1,f_2$, define the centered covariance
\begin{equation}\label{eq:covariance-definition}
 \begin{split}
 \Cov_0(f_1,f_2)
 :=\lim_{T\to\infty}\frac1T\int_{SM}
 &\left(\int_0^T(f_1-\overline f_1)\circ\phi_t\,dt\right)\\
 &\times\left(\int_0^T(f_2-\overline f_2)\circ\phi_t\,dt\right)dm_0.
 \end{split}
\end{equation}
The hyperbolic geodesic flow is exponentially mixing with respect
to $m_0$ (see \cite{Liverani}*{Corollary~2.5}), so covariance is also
given by the absolutely convergent Green--Kubo integral
\begin{equation}\label{eq:cov2}
 \Cov_0(f_1,f_2)
 =\int_{-\infty}^{\infty}\int_{SM}
 (f_1-\overline f_1)(v)(f_2-\overline f_2)(\phi_t v)
 \,dm_0(v)\,dt.
\end{equation}
Invariance of $m_0$ rewrites the expression before the limit
in \eqref{eq:covariance-definition} as
\[
 \int_{-T}^T\left(1-\frac{|t|}{T}\right)
 \int_{SM}(f_1-\overline f_1)(v)
 (f_2-\overline f_2)(\phi_t v)\,dm_0(v)\,dt.
\]
Exponential decay permits passage to the limit by dominated
convergence. Covariance is symmetric, bilinear, and continuous in
any fixed positive H\"older norm. We write
$\Var_0(f)=\Cov_0(f,f)\geq0$.
The formal adjoint of $\delta$ on symmetric tensors is
$(\delta^*\alpha)_{ij}=\tfrac12(\nabla_i\alpha_j+\nabla_j\alpha_i)$.

Flaminio's formula (see \cite{Flaminio}*{Proposition~5.1.1}) gives the
fixed-volume Hessian below (see also \cite{Pollicott}).

\begin{proposition}\label{prop:flaminio}
Let $\widehat g_s$ be a $C^2$ curve of $C^5$ negatively curved metrics
such that $\Vol(M,\widehat g_s)=V_0$, $\widehat g_0=g_0$, and
$\dot{\widehat g}_0=S$.  Then
\begin{equation*}
 \left.\frac{d^2}{ds^2}\right|_0 h(\widehat g_s)
 =\Cov_0\left(T(S)^\vee,\frac{n-1}{2}S^\vee\right),
\end{equation*}
where
\[
T(S)=\frac14 \nabla^*\nabla S-\frac12\delta^*\delta S+\frac12(\tr_{g_0}S)g_0-\frac12 S.
\]
In particular, $T(S)=\tfrac14PS$ for $S\in\TT$, where
$P=\Delta-2$.
\end{proposition}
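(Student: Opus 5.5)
The proof will rest on Flaminio's computation \cite{Flaminio}*{Proposition~5.1.1}, translated into the conventions fixed above. The last assertion, for TT tensors, is then immediate. I plan three steps.

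\emph{Step 1: differentiate the first-variation formula once more.} Start from the Katok--Knieper--Weiss formula applied at each $\widehat g_s$:
\[
 \tfrac{d}{ds}h(\widehat g_s)=-\tfrac{h(\widehat g_s)}{2}\int_{S_{\widehat g_s}M}\dot{\widehat g}_s^\vee\,d\mu_{\widehat g_s}.
\]
Transport everything to the fixed bundle $SM$ by radial rescaling $v\mapsto v/|v|_{\widehat g_s}$. Differentiating at $s=0$ then produces three groups of terms:
\begin{itemize}
\item the derivative of $h$, which is zero because $Dh_{g_0}(S)$ is proportional to $D\Vol_{g_0}(S)=0$ under the volume constraint;
\item the terms involving $\ddot{\widehat g}_0$ and the normalization of the unit vectors, which are averages against $m_0$;
\item the derivative of the Bowen--Margulis measure $\mu_{\widehat g_s}$ at $s=0$.
\end{itemize}
Differentiating $\Vol(\widehat g_s)=V_0$ twice, and using the fiber-integration identity $\int A^\vee dm_0=\frac1{nV_0}\int\tr A$, should make the second group cancel or reduce to a multiple of $\int\tr(\ddot g)-\tfrac12|S|^2$, which vanishes. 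I expect only the measure-derivative term to survive.

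\emph{Step 2: linear response for the measure of maximal entropy.} After reparametrizing the flows by a H\"older conjugacy (structural stability), the Bowen--Margulis measure is the equilibrium state of the potential $-h\cdot(\text{time change})$. Its derivative is therefore given by the standard equilibrium-state formula:
\[
 \tfrac{d}{ds}\Big|_0\int f\,d\mu_s=\Cov_0(f,\dot\psi),
\]
where $\dot\psi$ is the variation of the potential. At a constant-curvature metric, the variation of the time change together with the first variation of the unstable Jacobian can be computed from the Riccati equation. Flaminio does exactly this, and obtains $\dot\psi$ cohomologous to a multiple of $T(S)^\vee$, modulo constants and flow coboundaries. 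Both are invisible to $\Cov_0$ against $S^\vee$: constants are removed by centering, and coboundaries are killed by the Green--Kubo integral \eqref{eq:cov2}.

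The constant $\frac{n-1}{2}=\frac{h(g_0)}{2}$ comes from the prefactor in the first-variation formula. The hypotheses ($C^5$ metrics, a $C^2$ curve) are those needed for $C^2$ dependence of the conjugacy and the potentials in a fixed H\"older norm, so that $\Cov_0$ is continuous there as noted above.

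\emph{Step 3: the TT reduction.} If $\tr S=0$ and $\delta S=0$, the terms $\delta^*\delta S$ and $(\tr S)g_0$ in $T(S)$ vanish. This leaves $T(S)=\tfrac14\nabla^*\nabla S-\tfrac12 S=\tfrac14(\Delta-2)S=\tfrac14 PS$.

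\emph{Main obstacle.} The hard part is Step 2, and specifically reconciling conventions with Flaminio: his sign of $\delta$, his Laplacian normalization, and his use of the second derivative under fixed volume rather than along an arbitrary curve. To handle this I would first test the formula on a conformal direction $S=fg_0$ and on a direction $S=\delta^*\alpha$, a pure diffeomorphism. For the latter, the Hessian must vanish, and $T(\delta^*\alpha)^\vee$ must be a coboundary plus a term covariance-orthogonal to $(\delta^*\alpha)^\vee$. This check pins down the coefficients $\tfrac14,-\tfrac12,\tfrac12,-\tfrac12$.
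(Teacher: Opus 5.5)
Your opening sentence together with Step~3 is exactly the paper's proof. The paper cites Flaminio's Proposition~5.1.1 for the fixed-volume Hessian and notes that $\tr S=0$ and $\delta S=0$ reduce $T(S)$ to $\frac14(\nabla^*\nabla-2)S$. Steps~1--2, however, do not reconstruct that formula. If they are meant as the argument, there is a genuine gap, with three concrete problems.

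First, the static terms do not vanish. For $S\in\TT$, the condition $V''(0)=0$ together with \eqref{eq:volume-second-general} gives $\int_M\tr\ddot{\widehat g}_0\,dV_{g_0}=\|S\|_{L^2}^2$. Then \eqref{hd:fiber} gives
\[
 \int_{SM}\bigl((\ddot{\widehat g}_0)^\vee-(S^\vee)^2\bigr)\,dm_0
 =\frac{1}{nV_0}\|S\|_{L^2}^2-\frac{2}{n(n+2)V_0}\|S\|_{L^2}^2
 =\frac{1}{(n+2)V_0}\|S\|_{L^2(M)}^2\neq0 .
\]
Second, radial rescaling does not intertwine the two geodesic flows. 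The transported measures are therefore not $\phi_t$-invariant, and the equilibrium-state formula does not apply to them. You must instead use the orbit equivalence $\psi_s$ and the time change $a_s$, with $\psi_s^*\mu_{\widehat g_s}=a_sm_s/\int a_s\,dm_s$, where $m_s$ is the equilibrium state of $-h(\widehat g_s)a_s$. This produces extra terms beyond linear response: $\int S^\vee\dot a\,dm_0$ and the $s$-derivative of $\dot{\widehat g}_s^{\,\vee}\circ\psi_s$, which involves the merely H\"older infinitesimal conjugacy.

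Third, and most seriously, the linear-response term cannot carry $T(S)$. Since $h(\widehat g_s)$ is stationary at $s=0$, the potential varies by $-(n-1)\dot a$. The first variation of closed-geodesic length, $\frac12\int_\gamma S^\vee$, together with Liv\v{s}ic's theorem makes $\dot a-\frac12S^\vee$ a coboundary. So the potential variation is cohomologous to $-\frac{n-1}{2}S^\vee$, not to a multiple of $T(S)^\vee$, and this term contributes only $\frac{(n-1)^2}{4}\Var_0(S^\vee)$. For a TT eigentensor with $\Delta S=\lambda S$, the proposition gives $\frac{(n-1)(\lambda-2)}{8}\Var_0(S^\vee)$. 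The difference $\frac{(n-1)(\lambda-2n)}{8}\Var_0(S^\vee)$ must come from the terms you discarded. The clean bookkeeping is to differentiate $\mathscr P_{g_0}(-h(\widehat g_s)a_s)=0$ twice:
\[
 \left.\frac{d^2}{ds^2}\right|_0h(\widehat g_s)=(n-1)^2\Var_0(\dot a)-(n-1)\int_{SM}\ddot a\,dm_0 .
\]
The real work, where $\nabla^*\nabla S$ enters, is computing $\int\ddot a\,dm_0$ from the second variation of closed-geodesic lengths, including the Jacobi-field term.

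Finally, your check with $S=\delta^*\alpha$ is vacuous. With $\alpha^\vee(x,v)=\alpha_x(v)$ and $X$ the generator of $\phi_t$, one has $(\delta^*\alpha)^\vee=X\alpha^\vee$, a coboundary. By \eqref{eq:cov2} and mixing, its covariance with every H\"older function is zero, whatever the coefficients in $T$. For the same reason, the $-\frac12\delta^*\delta S$ term never affects $\Cov_0(T(S)^\vee,S^\vee)$, so no test can pin down its coefficient. Either rely on the citation, as the paper does, or carry out the pressure computation above in full.
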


\section{The scalar-curvature constraint}\label{sec:constraint}

We use the scalar-curvature variation formulas
(see \cite{Besse}*{Section~1.K}) and the TT identities below to compute
the constrained Hessian.

The linearization of Ricci curvature is
\begin{equation*}
 D\Ric_{g}(S)
 =\frac12\nabla^*\nabla S-\delta^*\delta S-\frac12 \Hess_g(\tr_g S)
 -R^{\circ}(S)+\Ric_g\circ S,
\end{equation*}
where, in an orthonormal frame,
\begin{equation*}
    R^{\circ}(S)_{ij}=R_{ikjl}S_{kl},\qquad
    (\Ric_g\circ S)_{ij}=\frac12 (R_{ik}S_{kj}+R_{jk}S_{ki}).
\end{equation*}
Taking the trace and differentiating the inverse metric gives the
scalar-curvature linearization
\begin{equation}\label{eq:scalar-linearization}
 D\Sc_g(H)=\Delta_g(\tr_g H)+\delta_g\delta_g H
            -\langle\Ric(g),H\rangle_g.
\end{equation}
At $g_0$ the formulas simplify as follows.
\begin{equation}\label{eq:ricci-variation-general}
 D\Ric_{g_0}(S)
 =\frac12\nabla^*\nabla S-nS+(\tr S)g_0
 -\delta^*\delta S-\frac12\nabla d(\tr S).
\end{equation}
\begin{equation}\label{eq:scalar-linearization-hyperbolic}
 D\Sc_{g_0}(S)=\nabla^*\nabla(\tr S)+\delta\delta S
 +(n-1)\tr S.
\end{equation}

Every tensor $H$ satisfying $D\Sc_{g_0}(H)=0$ can be put in TT gauge.
To obtain this gauge, solve $\delta\mathcal L_Xg_0=\delta H$. This is an elliptic
self-adjoint equation on vector fields, with
\[
 \int_M\ip{\delta\mathcal L_Xg_0}{X^\flat}\,dV_{g_0}
 =\frac12\int_M|\mathcal L_Xg_0|^2\,dV_{g_0}.
\]
Its kernel consists of Killing fields. When the Ricci curvature is negative
this kernel is zero and therefore the equation is solvable.
Consequently $H-\mathcal L_Xg_0$ is divergence-free. Scalar curvature
is diffeomorphism invariant and constant at $g_0$, so its trace satisfies
$(\Delta+n-1)\tr(H-\mathcal L_Xg_0)=0$, and hence vanishes.
Conversely, every TT tensor lies in the kernel of
$D\Sc_{g_0}$. The gauge is realized along a constrained curve by
pullback under the flow of $-X$, which preserves volume and entropy.

Each TT tensor can be integrated to a constrained curve.

\begin{lemma}\label{lem:integrability}
For every $S\in\TT$ there is a smooth curve
$g_s\in\Met_{-n(n-1)}$ with $g_{s=0}=g_0$ and
$\left.\frac{d}{ds}\right|_{s=0}g_s=S$.
For sufficiently small $s$, the metrics $g_s$ remain negatively curved.
\end{lemma}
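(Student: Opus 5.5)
The plan is to find a conformal correction to the linear path $g_0+sS$ via the implicit function theorem. For $S\in\TT$ and a function $u$, set
\[
 g_{s,u}=e^{2u}(g_0+sS),
\]
which is a Riemannian metric for small $|s|$ and small $u$. Define
\[
 F(s,u)=\Sc_{g_{s,u}}+n(n-1)
\]
as a map $(-\epsilon,\epsilon)\times C^{k+2,\alpha}(M)\to C^{k,\alpha}(M)$. The conformal transformation law gives
\[
 \Sc_{e^{2u}g}
 =e^{-2u}\bigl(\Sc_g+2(n-1)\Delta_g u-(n-2)(n-1)|du|_g^2\bigr),
\]
with $\Delta=\nabla^*\nabla\geq0$. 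Hence $F$ is smooth (indeed real-analytic) in $(s,u)$, since it is a polynomial in $u$, its derivatives and $e^{-2u}$, with coefficients depending smoothly on $s$. Also $F(0,0)=0$.

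Linearizing in $u$ at $(0,0)$ gives
\[
 D_uF(0,0)v=2(n-1)\Delta v-2v\Sc_{g_0}
 =2(n-1)\bigl(\Delta v+nv\bigr).
\]
This operator is an isomorphism $C^{k+2,\alpha}\to C^{k,\alpha}$ because $\Delta\geq0$ and $n>0$. The implicit function theorem then yields a smooth curve $u(s)$ with $u(0)=0$ and $F(s,u(s))=0$. Elliptic regularity, or running the argument for every $k$ and using uniqueness, shows $u(s)$ is $C^\infty$ and the curve is smooth in $C^\infty$.

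Next compute the velocity. Differentiating $F(s,u(s))=0$ at $s=0$ gives
\[
 2(n-1)(\Delta+n)\dot u+D\Sc_{g_0}(S)=0.
\]
By \eqref{eq:scalar-linearization-hyperbolic}, $D\Sc_{g_0}(S)=0$ for $S\in\TT$, since $\tr S=0$ and $\delta S=0$. Therefore $\dot u(0)=0$, and
\[
 \left.\frac{d}{ds}\right|_{s=0}g_s=2\dot u(0)g_0+S=S.
\]

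Finally, $g_s\to g_0$ in $C^2$, and sectional curvature depends continuously on the metric in $C^2$. Since $g_0$ has curvature $-1$ on the compact manifold $M$, $g_s$ has sectional curvature $<-1/2$ for small $|s|$, so it is negatively curved.

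The only real point needing care is the invertibility of the linearized operator. This holds exactly because the scalar curvature is negative: with positive $\Sc$, $\Delta-c$ could have kernel. So there is no serious obstacle beyond setting up the function spaces correctly.
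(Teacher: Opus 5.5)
Your proof is correct and follows the paper's route. Both arguments correct $g_0+sS$ in the conformal direction by the implicit function theorem, with linearization $2(n-1)(\Delta+n)$, invertible because $\Sc_{g_0}<0$, and both get $\dot u(0)=0$ from $D\Sc_{g_0}(S)=0$. The only difference is cosmetic: the paper uses the additive correction $g_0+sS+u(s)g_0$ together with \eqref{eq:scalar-linearization-hyperbolic}, whereas you use the multiplicative factor $e^{2u}$ and the conformal transformation law.
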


\begin{proof}
Fix $k\geq4$ and $0<\alpha<1$. On the open neighborhood of $(0,0)$
where $g_0+sS+ug_0$ is positive definite, consider
\[
 \Phi(s,u):=\Sc(g_0+sS+u g_0)+n(n-1),
 \qquad
 \Phi:\mathbb R\times C^{k,\alpha}(M)
       \longrightarrow C^{k-2,\alpha}(M).
\]
For $h=fg_0$ one has $\tr h=nf$ and $\delta h=-df$.
Equation \eqref{eq:scalar-linearization-hyperbolic} therefore gives
\begin{equation*}
 D_u\Phi(0,0)f
 =D\Sc_{g_0}(fg_0)
 =(n-1)(\nabla^*\nabla+n)f.
\end{equation*}
The operator on the right is an isomorphism
$C^{k,\alpha}\to C^{k-2,\alpha}$ because it is elliptic, self-adjoint,
and strictly positive.  The implicit function theorem produces a curve
$u(s)$ with $u(0)=0$ and $\Phi(s,u(s))=0$.  Since $S$ is TT,
\eqref{eq:scalar-linearization-hyperbolic} gives
$D_s\Phi(0,0)=0$; differentiating $\Phi(s,u(s))=0$ then shows
$\dot u(0)=0$.  Thus
\[
 g_s=g_0+sS+u(s)g_0
\]
has $\dot g_0=S$ and the required scalar curvature.  Elliptic regularity
and the smooth implicit function theorem give a smooth curve when $S$ is
smooth.  Finally, negative sectional curvature is open in the $C^2$
topology.
\end{proof}

Let $g_s\in\Met_{-n(n-1)}$ be a $C^2$ curve through $g_0$,
with its initial velocity in TT gauge, and put
\begin{equation*}
 S:=\dot g_0\in\TT,
 \qquad A:=\ddot g_0,
 \qquad V(s):=\Vol(M,g_s).
\end{equation*}
The scalar constraint determines the integral of $\tr A$.
Applying \eqref{eq:scalar-linearization} at time $s$ and using
$\partial_s\Sc(g_s)=0$ gives
\[
 0=\Delta_{g_s}(\tr_{g_s}\dot g_s)
   +\delta_{g_s}\delta_{g_s}\dot g_s
   -\langle\Ric(g_s),\dot g_s\rangle_{g_s}.
\]
The first two terms integrate to zero on the closed manifold.  Hence, for
every $s$,
\begin{equation*}
 I(s):=\int_M\langle\dot g_s,\Ric(g_s)\rangle_{g_s}\,dV_{g_s}=0.
\end{equation*}

To differentiate $I(s)$, write
\[
 \langle\dot g_s,\Ric(g_s)\rangle_{g_s}
 =g_s^{ia}g_s^{jb}(\dot g_s)_{ij}\Ric(g_s)_{ab},
 \qquad
 \left.\partial_s\right|_0g_s^{ij}=-S^{ij}.
\]
The volume form and the trace satisfy
\begin{equation}\label{eq:basic-volume-variations}
 (dV_{g_s})'\big|_0=\frac12(\tr S)dV_{g_0},
 \qquad
 \left.\partial_s\right|_0\tr_{g_s}\dot g_s
 =\tr A-|S|^2.
\end{equation}
Since $\Ric(g_0)=-(n-1)g_0$ and $\tr S=0$, the volume-form term in
$I'(0)$ vanishes.  The two inverse metrics each contribute
$(n-1)|S|^2$.  Thus
\begin{equation}\label{eq:I-prime-expanded}
 0=I'(0)=\int_M\left\{-(n-1)\tr A
 +\langle D\Ric_{g_0}(S),S\rangle
 +2(n-1)|S|^2\right\}dV_{g_0}.
\end{equation}
The general formula \eqref{eq:ricci-variation-general} reduces, because
$S$ is TT, to
\begin{equation}\label{eq:ricci-variation}
 D\Ric_{g_0}(S)=\frac12\nabla^*\nabla S-nS.
\end{equation}
Substitution into \eqref{eq:I-prime-expanded} yields
\begin{equation}\label{eq:trace-A}
 (n-1)\int_M\tr A\,dV_{g_0}
 =\int_M\left\{
 \frac12\langle\nabla^*\nabla S,S\rangle
 +(n-2)|S|^2\right\}dV_{g_0}.
\end{equation}

The first variation of volume is
\[
 V'(s)=\frac12\int_M\tr_{g_s}\dot g_s\,dV_{g_s}.
\]
Differentiating and using \eqref{eq:basic-volume-variations} gives the
general second-variation identity
\begin{equation}\label{eq:volume-second-general}
 V''(0)=\int_M\left\{
 \frac12\tr A-\frac12|S|^2+\frac14(\tr S)^2
 \right\}dV_{g_0}.
\end{equation}
Here $\tr S=0$, so inserting \eqref{eq:trace-A} into
\eqref{eq:volume-second-general} gives
\begin{equation}\label{eq:volume-second}
 \boxed{
 V''(0)=\frac{1}{4(n-1)}\int_M
 \langle(\nabla^*\nabla-2)S,S\rangle\,dV_{g_0}.}
\end{equation}
Thus the constrained volume variation depends only on $S$.

The sign of the volume variation follows from the TT spectral lower bound.
\begin{proposition}\label{hd:bochner}
For $S\in\TT$, put $C_{ijk}=\nabla_iS_{jk}-\nabla_jS_{ik}$. Then
\begin{equation}\label{hd:codazzi-energy}
 \frac12\|C\|_{L^2}^2=\ip{(\nabla^*\nabla-n)S}{S}_{L^2}.
\end{equation}
In particular $\nabla^*\nabla\geq n$ on $\TT$, with equality exactly for
trace-free Codazzi tensors, that is, tensors satisfying $C=0$.
\end{proposition}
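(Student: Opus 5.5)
We expand $\|C\|^2$ and integrate the cross term by parts. The goal is to turn the cross term into a commutator of covariant derivatives, which on a hyperbolic manifold is an explicit zeroth-order term.

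\textbf{Pointwise expansion.} In an orthonormal frame,
\[
 |C|^2=(\nabla_iS_{jk}-\nabla_jS_{ik})(\nabla_iS_{jk}-\nabla_jS_{ik})
 =2|\nabla S|^2-2\,\nabla_iS_{jk}\nabla_jS_{ik},
\]
so $\tfrac12\|C\|_{L^2}^2=\|\nabla S\|^2-\int_M\nabla_iS_{jk}\nabla_jS_{ik}$.

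\textbf{Cross term.} Integrating by parts gives
\[
 \int_M\nabla_iS_{jk}\nabla_jS_{ik}=-\int_M S_{jk}\nabla_i\nabla_jS_{ik}.
\]
We commute the derivatives, $\nabla_i\nabla_jS_{ik}=\nabla_j\nabla_iS_{ik}+(\text{curvature terms})$. The first term vanishes because $\delta S=0$.

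The curvature terms come from the Ricci identity,
\[
 \nabla_i\nabla_jS_{ik}-\nabla_j\nabla_iS_{ik}=-R_{ijip}S_{pk}-R_{ijkp}S_{ip},
\]
with sign conventions to be fixed so that $R_{ijkl}=-(\delta_{ik}\delta_{jl}-\delta_{il}\delta_{jk})$ for curvature $-1$. Contracting against $S_{jk}$ and using $\tr S=0$:
\begin{itemize}
\item the Ricci term gives $\Ric_{jp}S_{pk}S_{jk}=-(n-1)|S|^2$;
\item the full curvature term gives $\pm(|S|^2-(\tr S)^2)=\pm|S|^2$.
\end{itemize}
Combining these, the cross term should equal $n\|S\|^2$.

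\textbf{Main obstacle: signs.} The only real difficulty is tracking signs. I will calibrate them with the Weitzenböck-type identity $\nabla^*\nabla=\Delta_L-2(n-1)+\dots$, or by checking the final combination against the expected answer $-n|S|^2$. That expected answer is the standard fact that on TT tensors $\ip{\operatorname{div}\text{-}\nabla S}{S}$ picks up $\Ric\circ S-R^\circ S$, which equals $-(n-1)S-S=-nS$ in curvature $-1$. The result is
\[
 \tfrac12\|C\|^2=\|\nabla S\|^2-n\|S\|^2=\ip{(\nabla^*\nabla-n)S}{S},
\]
which is \eqref{hd:codazzi-energy}.

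\textbf{Consequences.} Since $\|C\|^2\ge0$, we get $\ip{\nabla^*\nabla S}{S}\ge n\|S\|^2$ on $\TT$. So $\nabla^*\nabla\ge n$ there; note that $\nabla^*\nabla$ preserves $\TT$ on an Einstein, indeed hyperbolic, manifold.

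\textbf{Equality case.} Equality holds iff $C=0$, i.e.\ $S$ is Codazzi. Conversely, a trace-free Codazzi tensor is automatically divergence-free: contracting $\nabla_iS_{jk}=\nabla_jS_{ik}$ over $i,k$ gives $\nabla^iS_{ij}=\nabla_j\tr S=0$. Hence trace-free Codazzi tensors are exactly the elements of $\TT$ attaining equality. Finally, attaining equality in the quadratic form at the bottom of the spectrum means $S$ is a $\lambda=n$ eigentensor.
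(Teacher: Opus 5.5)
Your route is the paper's argument written out by hand. The paper quotes the Weitzenb\"ock identity $(d_\nabla d_\nabla^*+d_\nabla^*d_\nabla)S=\nabla^*\nabla S-R^{\circ}(S)+\Ric\circ S$ for $S$ viewed as a $T^*M$-valued one-form, and uses $d_\nabla^*S=0$, $d_\nabla S=C$ and $\|d_\nabla S\|^2=\tfrac12\|C\|^2$. Expanding $|C|^2$, integrating the cross term by parts and commuting derivatives is exactly how that identity is proved, and your consequences and equality case are fine.

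\textbf{The gap.} The one step with real content is the coefficient $-n$, and you do not determine it. You leave the second curvature term as $\pm|S|^2$ and propose to fix signs by matching the answer you want, which is circular. As literally written, the signs come out wrong. With your normalization $R_{ijkl}=-(\delta_{ik}\delta_{jl}-\delta_{il}\delta_{jk})$, your displayed Ricci identity gives $\nabla_i\nabla_jS_{ik}-\nabla_j\nabla_iS_{ik}=(n-1)S_{jk}+S_{jk}=nS_{jk}$. That leads to $\tfrac12\|C\|^2=\|\nabla S\|^2+n\|S\|^2$. Under that normalization $\sum_iR_{ijip}=-(n-1)\delta_{jp}=\Ric_{jp}$, so the Ricci term of your display is $-\Ric_{jp}S_{pk}$, not the $+\Ric_{jp}S_{pk}$ of your first bullet. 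Your suggested calibration is also off in sign: with $\Delta_LS=\nabla^*\nabla S+2\Ric\circ S-2R^{\circ}(S)$, at curvature $-1$ one has $\nabla^*\nabla=\Delta_L+2n$ on trace-free tensors.

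\textbf{The fix.} Take $R(X,Y)=[\nabla_X,\nabla_Y]-\nabla_{[X,Y]}$ and $R_{ijkl}=\ip{R(e_i,e_j)e_k}{e_l}$. Your display is then exactly the Ricci identity for $\nabla_i\nabla_jS_{ab}:=(\nabla^2_{e_i,e_j}S)(e_a,e_b)$. Curvature $-1$ reads $R(X,Y)Z=\ip{X}{Z}Y-\ip{Y}{Z}X$, that is, $R_{ijkl}=\delta_{ik}\delta_{jl}-\delta_{il}\delta_{jk}$. Summing over $i$,
\[
 \nabla_i\nabla_jS_{ik}-\nabla_j\nabla_iS_{ik}
 =-(n-1)S_{jk}-S_{jk}+(\tr S)\delta_{jk}=-nS_{jk}.
\]
Since $\nabla_j\nabla_iS_{ik}=-\nabla_j(\delta S)_k=0$, the cross term equals $-\int_M S_{jk}(-nS_{jk})\,dV_{g_0}=n\|S\|_{L^2}^2$, and \eqref{hd:codazzi-energy} follows.

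An independent check on the sign is the one-form case. The same convention gives $\nabla_i\nabla_j\alpha_i-\nabla_j\nabla_i\alpha_i=\Ric_{jp}\alpha_p$, the familiar Bochner term. Alternatively, cite the Weitzenb\"ock formula as the paper does. Its curvature term $-R^{\circ}(S)+\Ric\circ S=-nS$ on trace-free tensors is consistent with the paper's formula for $D\Ric_{g_0}$.
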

\begin{proof}
The bundle-valued Bochner identity (see \cite{Besse}*{p.~355}) is
\[
 (d_\nabla d_\nabla^*+d_\nabla^*d_\nabla)S
 =\nabla^*\nabla S-R^\circ(S)+\Ric_{g_0}\circ S.
\]
Here $S$ is viewed as a $T^*M$-valued one-form. For trace-free
$S$ at curvature $-1$, the right-hand side is $\Delta S-nS$.
For $S\in\TT$, we also have $d_\nabla^*S=0$ and $d_\nabla S=C$.
With $|C|^2=\sum_{i,j,k}C_{ijk}^2$ in an orthonormal frame, the
bundle-valued two-form norm is
$\|d_\nabla S\|_{L^2}^2=\tfrac12\|C\|_{L^2}^2$,
so integration gives \eqref{hd:codazzi-energy}.
If $S$ is Codazzi, the same identity gives $\Delta S=nS$.
Conversely, equality in \eqref{hd:codazzi-energy} forces $C=0$.
Finally, contracting the Codazzi equation gives
$\nabla^iS_{ij}=\nabla_j\tr S$, so a trace-free Codazzi tensor is TT.
\end{proof}

Combining \eqref{eq:volume-second} and \eqref{hd:codazzi-energy} yields
\[
 V''(0)=\frac{1}{4(n-1)}
 \left(\frac12\|C\|_{L^2}^2+(n-2)\|S\|_{L^2}^2\right)>0
 \qquad(S\ne0).
\]
Thus every nonzero TT direction increases volume to second order
on the scalar-curvature constraint.

\section{The constrained entropy Hessian}\label{sec:hessian}

We combine the fixed-volume entropy formula from
Section~\ref{sec:variation} with the
volume variation forced by the scalar-curvature constraint.
Let $g_s\in\Met_{-n(n-1)}$ be a $C^2$ curve through $g_0$ with
$\dot g_0=S\in\TT$, and put $V(s)=\Vol(M,g_s)$.
Normalize it by setting
\begin{equation*}
 \widehat g_s=c(s)g_s,
 \qquad
 c(s)=\left(\frac{V_0}{V(s)}\right)^{2/n},
 \qquad
 F(s):=c(s)^{1/2}=\left(\frac{V_0}{V(s)}\right)^{1/n}.
\end{equation*}
Then $\Vol(M,\widehat g_s)=V_0$.  Since $S$ is trace-free,
$V'(0)=0$; hence
\begin{equation}\label{eq:normalization-derivatives}
 F(0)=1,
 \qquad F'(0)=0,
 \qquad F''(0)=-\frac{V''(0)}{nV_0},
 \qquad \dot{\widehat g}_0=S.
\end{equation}
For small $s$, the normalized metrics are still negatively curved.

Entropy satisfies $h(cg)=c^{-1/2}h(g)$.  Therefore
\begin{equation}\label{eq:entropy-scaling}
 h(g_s)=F(s)h(\widehat g_s).
\end{equation}
Thus,
\begin{equation*}
\frac{d}{ds}h\left( g_{s}\right) =F'(s)h(\widehat g_s)
+F(s)\frac{d}{ds}h(\widehat g_s).
\end{equation*}
The Katok--Knieper--Weiss formula and the fixed-volume condition give
$\frac{d}{ds}|_0h(g_s)=0$. The TT gauge and diffeomorphism
invariance therefore show that $g_0$ is critical for entropy on
$\Met_{-n(n-1)}$.  Differentiating
\eqref{eq:entropy-scaling} twice and using $h(g_0)=n-1$ and
\eqref{eq:normalization-derivatives}, we obtain
\begin{equation*}
 \left.\frac{d^2}{ds^2}\right|_0h(g_s)
 =\left.\frac{d^2}{ds^2}\right|_0h(\widehat g_s)-\frac{n-1}{nV_0}V''(0).
\end{equation*}
By Proposition~\ref{prop:flaminio} and
\eqref{eq:volume-second} we obtain the following formula.

\begin{theorem}\label{thm:constrained-hessian}
Let $g_s\in\Met_{-n(n-1)}$ be a $C^2$ curve through $g_0$ with
$S=\dot g_0\in\TT$.  Then
\begin{equation}\label{eq:main-hessian}
 \boxed{\begin{aligned}
 \mathcal H(S,S)
 :=\left.\frac{d^2}{ds^2}\right|_0h(g_s)
 &=\frac{n-1}{8}\Cov_0\bigl((PS)^\vee,S^\vee\bigr)
 -\frac{1}{4nV_0}\langle PS,S\rangle_{L^2(M)},\\
 &\hspace{35mm} P=\nabla^*\nabla-2.
 \end{aligned}}
\end{equation}
The right-hand side depends only on $S$ and not on the acceleration of the
chosen constrained curve.  By Lemma~\ref{lem:integrability}, every
$S\in\TT$ occurs in such a curve.
\end{theorem}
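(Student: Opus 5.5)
The proof assembles the computation displayed just before the statement. Start from the scaling identity \eqref{eq:entropy-scaling}, $h(g_s)=F(s)h(\widehat g_s)$, and differentiate twice at $s=0$:
\[
 \frac{d^2}{ds^2}\Big|_0 h(g_s)
 =F''(0)h(g_0)+2F'(0)\frac{d}{ds}\Big|_0h(\widehat g_s)
 +\frac{d^2}{ds^2}\Big|_0h(\widehat g_s).
\]
The middle term vanishes because $F'(0)=0$ by \eqref{eq:normalization-derivatives}. Using $h(g_0)=n-1$ and $F''(0)=-V''(0)/(nV_0)$, the first term becomes $-\frac{n-1}{nV_0}V''(0)$.

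The remaining term is handled by Proposition~\ref{prop:flaminio}. Its hypotheses hold: $\widehat g_s$ is a $C^2$ curve of smooth, hence $C^5$, metrics; it is negatively curved for small $s$, since $C^2$-closeness to $g_0$ is preserved by the scalar factor $c(s)\to1$; it has volume $V_0$; and $\dot{\widehat g}_0=S$. The proposition then gives $\Cov_0(T(S)^\vee,\frac{n-1}{2}S^\vee)$. Since $S$ is TT, $\delta S=0$ and $\tr S=0$, so $T(S)=\frac14(\nabla^*\nabla S-2S)=\frac14PS$. Bilinearity of $\Cov_0$ therefore yields the first term $\frac{n-1}{8}\Cov_0((PS)^\vee,S^\vee)$.

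For the volume term, substitute \eqref{eq:volume-second}, which expresses $V''(0)$ using only the scalar-curvature constraint together with TT-ness of $S$:
\[
 \frac{n-1}{nV_0}\cdot\frac{1}{4(n-1)}\langle PS,S\rangle_{L^2}
 =\frac{1}{4nV_0}\langle PS,S\rangle_{L^2}.
\]
This gives \eqref{eq:main-hessian}. Independence from the acceleration $A=\ddot g_0$ is now visible. $A$ entered only through $\int_M\tr A$, and \eqref{eq:trace-A} eliminated that in favor of $S$. The fixed-volume Hessian in Proposition~\ref{prop:flaminio} depends only on the velocity, because the first variation of $h$ vanishes at $g_0$ for volume-preserving directions. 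The last claim follows directly from Lemma~\ref{lem:integrability}.

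The main point needing care is exactly this acceleration independence in Flaminio's formula. The second derivative of $h(\widehat g_s)$ contains a term $Dh_{g_0}(\ddot{\widehat g}_0)$. I would check that it vanishes by using the formula $Dh_{g_0}=-\frac{n-1}{nV_0}D\Vol_{g_0}$ and noting that
\[
 \frac{d^2}{ds^2}\Vol(\widehat g_s)=0=D\Vol(\ddot{\widehat g}_0)+D^2\Vol(S,S).
\]
Whatever leftover $D^2\Vol(S,S)$ contribution this produces is already built into the fixed-volume formula, which is stated for arbitrary volume-normalized curves. So the statement of Proposition~\ref{prop:flaminio} as given suffices, and nothing beyond bookkeeping remains.
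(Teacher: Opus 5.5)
Your proposal is correct and is the paper's own argument: you differentiate $h(g_s)=F(s)h(\widehat g_s)$ twice, use $F'(0)=0$ and $F''(0)=-V''(0)/(nV_0)$, apply Proposition~\ref{prop:flaminio} with $T(S)=\tfrac14PS$, and substitute \eqref{eq:volume-second}. One small correction to your closing remarks: the term $Dh_{g_0}(\ddot{\widehat g}_0)=\frac{n-1}{nV_0}D^2\Vol(S,S)=-\frac{n-1}{2nV_0}\|S\|_{L^2}^2$ does not vanish, because the acceleration of a volume-normalized curve is not a first-order volume-preserving direction; it does, however, depend only on $S$, and that is all the acceleration-independence claim needs.
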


\begin{proof}
Proposition~\ref{prop:flaminio} and $T(S)=\frac14PS$ give
\[
 \left.\frac{d^2}{ds^2}\right|_0h(\widehat g_s)
 =\frac{n-1}{8}\Cov_0\bigl((PS)^\vee,S^\vee\bigr).
\]
Multiplying \eqref{eq:volume-second} by
$-(n-1)/(nV_0)$ gives the second term of
\eqref{eq:main-hessian}.  This proves the formula and its independence of the chosen curve.
\end{proof}

\section{The geodesic X-ray normal operator}\label{sec:xray}

Formula \eqref{eq:cov2} identifies covariance with the generalized
X-ray normal operator. Define $\Pi_2$ weakly on symmetric two-tensors whose evaluation functions have zero
$m_0$-mean by
\begin{equation*}
 \langle\Pi_2A,B\rangle_{L^2(M)}
 :=\Cov_0(A^\vee,B^\vee).
\end{equation*}
Microlocal results for Anosov geodesic flows show that, after restriction
to solenoidal tensors, $\Pi_2$ is nonnegative and elliptic of order $-1$
(see \cite{Gui} and \cite{GKL}*{Lemma~2.1}). The convention of adding
the projection onto constants changes this operator only by a
finite-rank smoothing term, which vanishes on TT tensors.  Thus, the
covariance term in \eqref{eq:main-hessian} has order one:
\[
 \Cov_0((PS)^\vee,S^\vee)=\langle\Pi_2PS,S\rangle.
\]
The second term of \eqref{eq:main-hessian} has order two and
has a negative sign.  This determines the sign at high frequencies.

\begin{theorem}\label{thm:negative-directions}
Assume $n\geq3$.  On $\TT$, the quadratic form $\mathcal H$ in
\eqref{eq:main-hessian} is represented by a self-adjoint operator with
negative elliptic principal symbol of order two and domain $H^2\cap\overline{\TT}^{L^2}$.  In particular:
\begin{enumerate}
 \item $\mathcal H$ is not positive definite;
 \item it has infinitely many negative directions;
 \item its positive spectral subspace is finite-dimensional.
\end{enumerate}
Equivalently, $\mathcal H$ is strictly negative on the orthogonal
complement of a sufficiently large finite-dimensional subspace of $\TT$.
\end{theorem}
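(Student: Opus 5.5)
The plan is to write the quadratic form as $\mathcal H(S,S)=\langle LS,S\rangle_{L^2}$ with
\[
 L=\frac{n-1}{16}\bigl(\Pi_2P+P\Pi_2\bigr)-\frac{1}{4nV_0}P,
\]
restricted to $\TT$, that is, composed with the $L^2$-orthogonal projection $\pi_{\mathrm{TT}}$ onto the closure of $\TT$.

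The symmetrization is legitimate because $\Cov_0$ is symmetric. This gives $\Cov_0((PS)^\vee,S^\vee)=\langle\Pi_2PS,S\rangle=\langle S,\Pi_2PS\rangle$, and for real quadratic forms only the symmetric part matters. The map $\pi_{\mathrm{TT}}$ is a pseudodifferential projector; it is built from $\delta^*(\delta\delta^*)^{-1}\delta$ and the trace projection, and $\delta\delta^*$ is invertible since $g_0$ has no Killing fields. The operator $P=\nabla^*\nabla-2$ commutes with $\pi_{\mathrm{TT}}$. This follows from the standard identities at constant curvature: the rough Laplacian preserves trace-free tensors, and $\delta\nabla^*\nabla=(\nabla^*\nabla+c_n)\delta$ on trace-free symmetric tensors for a constant $c_n$, a curvature commutation.

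Hence $L_{\mathrm{TT}}:=\pi_{\mathrm{TT}}L\pi_{\mathrm{TT}}$ is a self-adjoint classical pseudodifferential operator acting on the TT subbundle range. The order-two part comes only from $-\frac{1}{4nV_0}P$, whose principal symbol is $-\frac{1}{4nV_0}|\xi|^2$ times the identity. By \cite{Gui} and \cite{GKL}*{Lemma~2.1}, $\Pi_2$ has order $-1$ on solenoidal tensors, so the remaining terms have order at most one. The principal symbol of $L_{\mathrm{TT}}$ is therefore $-\frac{1}{4nV_0}|\xi|^2$ on the range of the principal symbol of $\pi_{\mathrm{TT}}$, which is negative elliptic of order two there.

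Next I establish self-adjointness on $H^2\cap\overline{\TT}^{L^2}$. The operator $-L_{\mathrm{TT}}$ is elliptic of order two on this invariant subspace, modulo the parametrix built from $P$, and it is symmetric. Concretely, write
\[
 -L_{\mathrm{TT}}=\tfrac{1}{4nV_0}P+B,
\]
where $B$ is order one and symmetric. Then $B$ is $P$-bounded with relative bound zero, so Kato--Rellich gives self-adjointness on the domain of $P|_{\TT}$, which is $H^2\cap\overline{\TT}$ by elliptic regularity of $\nabla^*\nabla$ restricted to TT tensors.

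The three consequences follow from this relative-boundedness estimate. It gives
\[
 \mathcal H(S,S)\le-\frac{1}{4nV_0}\|\nabla S\|^2+C\|S\|_{H^{1/2}}^2\le-c\|S\|_{H^1}^2+C'\|S\|_{L^2}^2 .
\]
So $L_{\mathrm{TT}}$ is bounded above, has compact resolvent (from $H^2\hookrightarrow L^2$), and its spectrum consists of eigenvalues tending to $-\infty$. Only finitely many eigenvalues are $\ge0$, which proves (3) and the equivalent complement statement. There are infinitely many negative eigenvalues because $\TT$ is infinite-dimensional. This holds since the TT spectrum of $\Delta$ is infinite: at curvature $-1$ with $n\ge3$, the TT eigenspaces form an infinite-dimensional elliptic problem on the range of a projector of nonzero rank $\frac{n(n+1)}2-1-n$. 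That gives (2), and (1) follows from (2).

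The main obstacle is the microlocal input: confirming that $\Pi_2$ composed with $P$ and the projector is genuinely a pseudodifferential operator of order at most one on TT tensors. This needs the cited microlocal results together with the fact that the finite-rank correction vanishes on $\TT$. The commutation of $P$ with $\pi_{\mathrm{TT}}$ also needs care. If that commutation proves awkward, I will work instead with the Friedrichs extension of the closed semibounded form $-\mathcal H$ on $H^1\cap\overline{\TT}$, and then obtain the $H^2$ domain by elliptic regularity.
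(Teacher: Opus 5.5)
Your proposal is correct and follows the paper's argument. You symmetrize $\Pi_2P$ and compress to $\TT$, where it has order one by the cited order $-1$ property of $\Pi_2$. You then read off the negative order-two principal symbol from $-\frac{1}{4nV_0}P$, and your Kato--Rellich step and semiboundedness estimate spell out the ``standard elliptic spectral theory'' that the paper invokes.

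One small correction concerns the TT projector. It should be built from the trace-free part $D_0\beta=\delta^*\beta+\frac1n(\delta\beta)g_0$ of $\delta^*$, as in Section~\ref{sec:criterion}. Composing $\operatorname{Id}-\delta^*(\delta\delta^*)^{-1}\delta$ with the trace-free projection does not give the orthogonal projection onto their intersection. This does not affect your argument, which only needs some order-zero pseudodifferential projector onto $\TT$.
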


\begin{proof}
After compression to the TT space, the symmetrization of
$\Pi_2P$ is a pseudodifferential operator of order one.
The orthogonal TT projection has order zero and nonzero principal
symbol, as recalled in Section~\ref{sec:criterion}.  The second term of \eqref{eq:main-hessian} has principal symbol
\[
 -\frac{1}{4nV_0}|\xi|^2\operatorname{Id}
\]
on the TT symbol space.  It is therefore the principal symbol of the full
Hessian.  Standard elliptic spectral theory gives all three conclusions.

On a hyperbolic manifold, $\nabla^*\nabla$ preserves $\TT$.  If
$\nabla^*\nabla S_j=\lambda_jS_j$ with $\lambda_j\to\infty$, the order
$-1$ property gives
\[
 \Var_0(S_j^\vee)
 =O(\lambda_j^{-1/2})\|S_j\|_{L^2(M)}^2.
\]
Substitution in \eqref{eq:main-hessian} shows that
$\mathcal H(S_j,S_j)<0$ for all sufficiently large $j$.
\end{proof}

Thus only finitely many low-frequency modes can contribute positive
directions. We determine these modes in
Sections~\ref{hd:radial-section}--\ref{hd:multiplier-section}.

\section{The eigenmode criterion}\label{sec:criterion}

High-frequency directions are negative in every dimension. For the
remaining modes, integration over the unit tangent spheres gives the
threshold $(n+2)/(n-1)$ below.

The rough Laplacian on symmetric two-tensors is the elliptic operator
\[
 \Delta=\nabla^*\nabla:
 C^\infty(M;S^2T^*M)\longrightarrow C^\infty(M;S^2T^*M).
\]
We consider its restriction to the space $\TT$ of trace-free,
divergence-free tensors.  On our hyperbolic manifold,
\[
 \begin{aligned}
 \tr(\Delta S)&=\Delta(\tr S),\\
 \delta(\Delta S)&=\Delta(\delta S)+(n+1)\delta S+2d(\tr S).
 \end{aligned}
\]
Here each $\Delta$ acts on the indicated tensor bundle.  These
identities show that $\Delta:\TT\to\TT$.

Let $\overline{\mathcal T}_{\mathrm{TT}}^{L^2}$ be the $L^2$ closure of $\TT$.
The resolvent $(1+\Delta)^{-1}$ also preserves this space.
For TT data, taking the trace of the resolvent equation gives
$(1+\Delta)\tr S=0$, hence $\tr S=0$.
Taking the divergence then gives $(\Delta+n+2)\delta S=0$,
hence $\delta S=0$.  The same conclusion holds for data in the
$L^2$ closure by continuity of the resolvent.  Thus the restriction of $\Delta$ is self-adjoint, with domain
\[
 H^2(M;S^2T^*M)\cap\overline{\mathcal T}_{\mathrm{TT}}^{L^2}.
\]
Elliptic regularity and compactness of the Sobolev embedding
$H^2\hookrightarrow L^2$ give compact resolvent and smooth
eigentensors.

To prove that the TT spectrum is unbounded, we also need that
$\TT$ is infinite-dimensional.  Restrict
$\delta$ to trace-free symmetric tensors and denote its adjoint by
\[
 D_0\beta=\delta^*\beta+\frac1n(\delta\beta)g_0.
\]
The symbol of $\delta$ on trace-free tensors is surjective, so
$\delta D_0$ is elliptic.  Let $Q$ be its inverse on the orthogonal
complement of its kernel, extended by zero on the kernel.
Then $\Pi=\operatorname{Id}-D_0Q\delta$ is the orthogonal
projection onto trace-free divergence-free tensors.  The elliptic
parametrix theorem makes $\Pi$ an operator of order zero whose
principal symbol projects onto
\[
 \{A\in S^2T_x^*M:\tr A=0,\ A(\xi^\sharp,\cdot)=0\},
 \qquad \xi\ne0.
\]
This space has dimension $(n-2)(n+1)/2>0$.
Thus $\Pi$ has nonzero principal symbol and cannot have finite rank;
its smooth range $\TT$ is infinite-dimensional.

For each eigenvalue $\lambda$, let
\[
 E_\lambda=\{S\in\TT:\Delta S=\lambda S\}
\]
be its eigenspace; a nonzero element is called a TT eigentensor.  The
spectral theorem gives finite-dimensional eigenspaces and an
$L^2$-orthonormal basis $\{S_j\}_{j\geq1}$ of
$\overline{\mathcal T}_{\mathrm{TT}}^{L^2}$ consisting of smooth TT eigentensors, with
\[
 \Delta S_j=\lambda_jS_j,\qquad
 n\leq\lambda_1\leq\lambda_2\leq\cdots,\qquad
 \lambda_j\longrightarrow\infty.
\]
The eigenvalues are listed with multiplicity;
Proposition~\ref{hd:bochner} gives $\lambda_j\geq n$.

Every smooth $S\in\TT$ has the spectral expansion
\[
 S=\sum_{j=1}^{\infty}s_jS_j,
 \qquad s_j=\ip{S}{S_j}_{L^2(M)}.
\]
The series converges in every Sobolev norm, which we write as
\[
 \|S\|_{H^r}:=\|(1+\Delta)^{r/2}S\|_{L^2}.
\]

Suppose $S\in\TT$ is a nonzero eigentensor,
\[
 \nabla^*\nabla S=\lambda S.
\]
By Proposition~\ref{hd:bochner}, $\lambda\geq n$, so $\lambda-2>0$.  Formula
\eqref{eq:main-hessian} becomes
\begin{equation*}
 \mathcal H(S,S)
 =\frac{\lambda-2}{8}
 \left\{(n-1)\Var_0(S^\vee)
 -\frac{2}{nV_0}\|S\|_{L^2(M)}^2\right\}.
\end{equation*}
A trace-free tensor has $\overline{S^\vee}=0$. The fourth-moment identity for the rotationally invariant
probability measure $d\sigma$ on a unit sphere is
\[
 \int v_iv_jv_kv_\ell\,d\sigma
 =\frac{\delta_{ij}\delta_{k\ell}+\delta_{ik}\delta_{j\ell}
       +\delta_{i\ell}\delta_{jk}}{n(n+2)}.
\]
Contracting with two trace-free tensors and integrating over $M$ gives
\begin{equation}\label{hd:fiber}
 \int_{SM}A^\vee S^\vee\,dm_0
 =\frac{2}{n(n+2)V_0}\ip{A}{S}_{L^2(M)}
 \quad(\tr A=\tr S=0).
\end{equation}
Therefore, we arrive at
\begin{equation*}
 \mathcal H(S,S)
 =\frac{(\lambda-2)(n-1)}{8}
 \left\{\Var_0(S^\vee)
 -\frac{n+2}{n-1}\|S^\vee\|_{L^2(SM,m_0)}^2\right\}.
\end{equation*}

Consequently,
\begin{equation}\label{eq:variance-criterion-general}
 \boxed{
 \mathcal H(S,S)>0
 \quad\Longleftrightarrow\quad
 \frac{\Var_0(S^\vee)}{\|S^\vee\|_{L^2(SM,m_0)}^2}
 >\frac{n+2}{n-1}.}
\end{equation}
For $n=3$, this becomes
\begin{equation*}
 \boxed{
 \mathcal H(S,S)>0
 \quad\Longleftrightarrow\quad
 \Var_0(S^\vee)>\frac52\|S^\vee\|_{L^2(SM,m_0)}^2.}
\end{equation*}

The quotient in \eqref{eq:variance-criterion-general} is variance
divided by the squared $L^2(SM,m_0)$ norm. By \eqref{hd:fiber}, it is
$n(n+2)V_0/2$ times the $L^2(M)$ Rayleigh quotient of $\Pi_2$.  In general dimension, a saddle point can occur to
second order only if the finite-dimensional low-frequency part of
$\mathcal H$ has a positive eigenvalue. In dimension three the
normalized variance is strictly less than $5/2$, as proved below.

\section{Radial reduction of the correlation function}\label{hd:radial-section}

Fix $S\in E_\lambda$ and $x\in M$.  Lift $S$ to $\mathbb H^n$ and
choose a lift of $x$, using the same notation for both.  In geodesic polar coordinates
centered at $x$,
\[
 g_0=dr^2+\sinh^2r\,\sigma,
\]
where $\sigma$ is the round metric on the unit sphere in $T_xM$.
Set $e_0=\partial_r$ and extend a local $\sigma$-orthonormal frame
$\{e_a\}_{a=1}^{n-1}$ independently of $r$.  The angular fields have
length $\sinh r$, and the hyperbolic orthonormal frame is
\[
 e_0,\quad (\sinh r)^{-1}e_1,\quad\ldots,\quad
 (\sinh r)^{-1}e_{n-1}.
\]
For $r>0$, define the radial component $U$ and angular one-form $W$ by
\[
 U=S(e_0,e_0),\qquad W_a=S(e_0,e_a).
\]
The function $U(r,\xi)$ is the evaluation of $S$ on the velocity of
the geodesic from $x$ in direction $\xi$.  We use the nonnegative
spherical Laplacian $\Delta_\sigma=-\operatorname{div}_\sigma\nabla^\sigma$.

\begin{lemma}\label{hd:polar-laplace}
For $r>0$, the radial component $U$ satisfies
\begin{equation}\label{hd:polar-equation}
 \partial_r^2U+(n+3)\coth r\,\partial_rU
 -\frac{\Delta_\sigma U}{\sinh^2r}
 +(\lambda+2n\coth^2r)U=0.
\end{equation}
\end{lemma}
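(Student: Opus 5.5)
Throughout, $S$ is trace-free, divergence-free and satisfies $\nabla^*\nabla S=\lambda S$ on $\mathbb H^n$. The plan is to write these three equations in the polar frame $\{e_0,(\sinh r)^{-1}e_a\}$ and eliminate every component except $U=S(e_0,e_0)$.

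\textbf{Step 1: connection coefficients.} In polar coordinates the data are
\[
 \nabla_{e_0}e_0=0,\qquad
 \nabla_{e_0}e_a=\nabla_{e_a}e_0=\coth r\,e_a,\qquad
 \nabla_{e_a}e_b=\nabla^\sigma_{e_a}e_b-\sinh r\cosh r\,\sigma_{ab}\,e_0 .
\]
The angular block $S_{ab}=S(e_a,e_b)$ has trace $\sigma^{ab}S_{ab}=-\sinh^2 r\,U$, because $\tr S=0$.

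\textbf{Step 2: first-order divergence identities.} Evaluate $\delta S=0$ on $e_0$ and on $e_a$. The $e_0$-component gives
\[
 \partial_rU+(n-1)\coth r\,U-\frac{\operatorname{div}_\sigma W}{\sinh^2 r}
 -\coth r\,\frac{\sigma^{ab}S_{ab}}{\sinh^2 r}=0 ,
\]
and the last term equals $-\coth r\,U$ by the trace condition. Hence
\[
 \operatorname{div}_\sigma W=\sinh^2 r\bigl(\partial_rU+n\coth r\,U\bigr).
\]
Signs will be rechecked against the normalization $\delta S=-\nabla^iS_{ij}$.

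The $e_a$-component similarly gives a relation between $\partial_rW$ and $\operatorname{div}_\sigma$ of the trace-free angular block. I expect to use only its divergence.

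\textbf{Step 3: the eigen-equation on $(e_0,e_0)$.} Expand $(\nabla^*\nabla S)(e_0,e_0)$. This is $-\partial_r^2U-(n-1)\coth r\,\partial_rU+\Delta_\sigma U/\sinh^2 r$, plus correction terms from the Christoffel symbols:
\begin{itemize}
\item terms $\coth r\,\operatorname{div}_\sigma W/\sinh^2 r$ from differentiating $e_0$ in angular directions;
\item zeroth-order terms $\coth^2 r\,\bigl(2(n-1)U-2\sigma^{ab}S_{ab}/\sinh^2 r\bigr)$, which become multiples of $\coth^2 r\,U$ after the trace substitution.
\end{itemize}

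\textbf{Step 4: elimination.} Substitute the expression for $\operatorname{div}_\sigma W$ from Step 2. The $W$-terms then turn into extra $\coth r\,\partial_rU$ and $\coth^2 r\,U$ terms. The first-order coefficient should rise from $n-1$ to $n+3$, and the zeroth-order part should assemble to $2n\coth^2 r$. Setting the result equal to $\lambda U$ then yields \eqref{hd:polar-equation}.

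\textbf{Main obstacle.} The hard part is the bookkeeping in Step 3. Two points need care:
\begin{itemize}
\item the frame $e_a$ is not orthonormal, so every index must be raised with $\sinh^{-2}r\,\sigma$;
\item the angular Laplacian of a mixed component produces cross terms.
\end{itemize}
To control both, I would first compute $\nabla S(e_0,e_0;\cdot)$ and $\nabla S(e_0,e_a;\cdot)$ explicitly, then take the trace of the second covariant derivative.

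As a check, I would run $n=2$-like sanity tests, or test the final equation on the model tensor $S=\Hess f$ minus its trace. Alternatively, one could compare asymptotics: a correct equation must yield a multiplier consistent with the indicial roots $a\pm\mu$ appearing in Theorem~\ref{thm:intro-covariance}. Those roots come from $\partial_r^2+(n+3)\partial_r+\lambda+2n$, and they are $-a\pm\mu$ with $\mu^2=a^2-2n-\lambda=\frac{(n-1)^2+8}{4}-\lambda$, which matches the theorem. This agreement strongly supports the stated coefficients.
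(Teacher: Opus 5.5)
Your plan follows the paper's route: use the polar frame, use the radial component of $\delta S=0$ to express $\operatorname{div}_\sigma W$ through $U$, then take the $(e_0,e_0)$ component of $\Delta S$ and eliminate. As written, however, the computation does not go through, for two reasons.

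\textbf{Sign error in Step 2.} The angular part of $\nabla^iS_{i0}$ is $\sinh^{-2}r\sum_a(\nabla_{e_a}S)(e_a,e_0)=\sinh^{-2}r\bigl(\operatorname{div}_\sigma W+n\sinh r\cosh r\,U\bigr)$, and it enters with a plus sign. Also, your ``last term'' is $-\coth r\,\sigma^{ab}S_{ab}/\sinh^2r=+\coth r\,U$, not $-\coth r\,U$. Hence $\delta S=0$ gives $\operatorname{div}_\sigma W=-\sinh^2r\,(\partial_rU+n\coth r\,U)$. No choice of sign convention for $\delta$ can flip this, because only the relative sign of $\partial_rU$ and $\operatorname{div}_\sigma W$ matters. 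The flat analogue already detects the error: for $S=dx\,dy+dy\,dx$ on $\mathbb R^2$ one has $U=\sin2\theta$ and $W=r\cos2\theta$, and $\partial_\theta W=-r^2(\partial_rU+2U/r)$.

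\textbf{Missing coefficient in Step 3.} You leave out the one coefficient the lemma actually depends on. You need $(\nabla_{e_a}S)(e_0,e_0)=e_aU-2\coth r\,W_a$ and $(\nabla_{e_a}\nabla_{e_a}S)(e_0,e_0)=e_a\bigl((\nabla_{e_a}S)(e_0,e_0)\bigr)-2\coth r\,(\nabla_{e_a}S)(e_a,e_0)$. Summed over $a$ at a point where the spherical frame is normal, each piece contributes $-2\coth r\operatorname{div}_\sigma W$, so
\[
 -(\Delta S)(e_0,e_0)=\partial_r^2U+(n-1)\coth r\,\partial_rU-\frac{\Delta_\sigma U}{\sinh^2r}-\frac{4\coth r}{\sinh^2r}\operatorname{div}_\sigma W-2n\coth^2r\,U .
\]
Only with the corrected sign from Step 2 does the substitution raise $n-1$ to $n+3$ and turn $-2n\coth^2r$ into $+2n\coth^2r$. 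With your sign it produces $(n-5)\coth r\,\partial_rU$ and $(\lambda-6n\coth^2r)U$ instead.

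Your Step 4 states the target coefficients as expectations rather than deriving them. The indicial-root check is not independent evidence either, since the formula for $\kappa_n$ in Theorem~\ref{thm:intro-covariance} is itself derived from this lemma.
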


\begin{proof}
In this lifted frame, the connection formulas are
\[
 \begin{aligned}
 \nabla_{e_0}e_0&=0,&
 \nabla_{e_a}e_0=\nabla_{e_0}e_a&=\coth r\,e_a,\\
 \nabla_{e_a}e_b&=\nabla^\sigma_{e_a}e_b
 -\sinh r\cosh r\,\delta_{ab}e_0.
 \end{aligned}
\]
Trace-freeness gives $\sum_aS(e_a,e_a)=-\sinh^2r\,U$.
Applying the connection formulas to $W$ therefore gives
\[
 \sum_a(\nabla_{e_a}S)(e_0,e_a)
 =\operatorname{div}_\sigma W+n\sinh r\cosh r\,U.
\]
Taking the trace in the hyperbolic orthonormal frame, the radial
component of $\delta S=0$ gives
\begin{equation}\label{hd:polar-div}
 \frac{\operatorname{div}_\sigma W}{\sinh^2r}
 =-\partial_rU-n\coth r\,U.
\end{equation}

To compute $(\Delta S)(e_0,e_0)$, first note that
\[
 (\nabla_{e_0}S)(e_0,e_0)=\partial_rU,\qquad
 (\nabla_{e_a}S)(e_0,e_0)=e_aU-2\coth r\,W_a.
\]
Differentiate once more at a point where the spherical frame is normal,
and take the trace in the hyperbolic orthonormal frame.  Each angular
term contributes a factor $(\sinh r)^{-2}$, giving
\[
 \begin{aligned}
 -(\Delta S)(e_0,e_0)
 &=\partial_r^2U+(n-1)\coth r\,\partial_rU
      -\frac{\Delta_\sigma U}{\sinh^2r}\\
 &\quad-\frac{4\coth r}{\sinh^2r}\operatorname{div}_\sigma W
      -2n\coth^2r\,U.
 \end{aligned}
\]
Substituting \eqref{hd:polar-div} and
$(\Delta S)(e_0,e_0)=\lambda U$ proves \eqref{hd:polar-equation}.
\end{proof}

To compute the correlation with a smooth trace-free symmetric
two-tensor $A$, we average $U$ against $A_x(\xi,\xi)$ over initial
directions, then integrate over $x$.  For $\xi\in S_xM$, write
$\gamma_{x,\xi}(t)=\exp_x(t\xi)$ and define
\[
 F_{x,A,S}(t)=\int_{S_xM}A_x(\xi,\xi)
 S_{\gamma_{x,\xi}(t)}
       (\dot\gamma_{x,\xi}(t),\dot\gamma_{x,\xi}(t))\,d\sigma_x(\xi),
\]
where $d\sigma_x$ is the rotationally invariant probability measure
on $S_xM$.
By the definition of the geodesic flow,
\[
 \int_{SM}A^\vee(S^\vee\circ\phi_t)\,dm_0
 =\frac{1}{V_0}\int_M F_{x,A,S}(t)\,dV_{g_0}(x).
\]
The radial equation determines $F_{x,A,S}$ from its value at zero.

\begin{proposition}\label{pro:correlation}
For $S\in E_\lambda$ and every smooth trace-free symmetric two-tensor
$A$, the correlation satisfies
\begin{equation}\label{eq:correlation-factor}
 \int_{SM}A^\vee(S^\vee\circ\phi_t)\,dm_0
 =\varphi_{n,\lambda}(t)\int_{SM}A^\vee S^\vee\,dm_0
 \qquad(t\in\mathbb R),
\end{equation}
where $\varphi_{n,\lambda}\colon\mathbb R\to\mathbb R$ is the unique
smooth even solution of
\begin{equation}\label{hd:correlation-ode}
 \varphi''+(n+3)\coth r\,\varphi'+(\lambda+2n)\varphi=0,
 \qquad \varphi(0)=1,\quad \varphi'(0)=0.
\end{equation}
The differential equation holds for $r>0$.
\end{proposition}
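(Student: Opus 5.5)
The plan is to reduce the statement to a pointwise identity for the spherical mean of $U$ against a quadratic form, and then to integrate over $M$. Fix $x$ and a trace-free symmetric tensor $A$, and view $\xi\mapsto A_x(\xi,\xi)$ as a function on the unit sphere $S_xM$. Because $\tr A=0$, this function is a spherical harmonic of degree two, so $\Delta_\sigma A_x(\xi,\xi)=2n\,A_x(\xi,\xi)$. By definition $F_{x,A,S}(r)=\int_{S_xM}A_x(\xi,\xi)\,U(r,\xi)\,d\sigma_x(\xi)$.

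Multiply \eqref{hd:polar-equation} by $A_x(\xi,\xi)$ and integrate over the sphere. Since $\Delta_\sigma$ is self-adjoint on the sphere, the angular term becomes $-2n\sinh^{-2}r\,F$. Combining it with $2n\coth^2 r\,F$ and using $\coth^2r-\sinh^{-2}r=1$ gives
\[
 F''+(n+3)\coth r\,F'+(\lambda+2n)F=0,\qquad r>0.
\]
Next I would check the initial data. The function $U(r,\xi)$ extends smoothly to $r=0$ as $S_{\exp_x(r\xi)}(\dot\gamma,\dot\gamma)$, with $F(0)=\int A_x(\xi,\xi)S_x(\xi,\xi)\,d\sigma$. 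Moreover $F$ extends to an even smooth function of $r\in\mathbb R$, because replacing $(r,\xi)$ by $(-r,-\xi)$ traces the same geodesic with reversed velocity, and both the quadratic evaluations and the measure are invariant under this change. In particular $F'(0)=0$.

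The main point is uniqueness. The ODE has a regular singular point at $r=0$ with indicial equation $\rho(\rho-1)+(n+3)\rho=0$, so the exponents are $0$ and $-(n+2)$. Hence any solution that is bounded (or $C^1$) at $0$ is a multiple of the unique analytic even solution with value $1$, which is $\varphi_{n,\lambda}$. I would justify existence and evenness of $\varphi_{n,\lambda}$ by Frobenius, or by noting that it is a hypergeometric function in $\sinh^2(r/2)$. This gives $F_{x,A,S}(r)=\varphi_{n,\lambda}(r)F_{x,A,S}(0)$ for $r\ge0$, and by evenness for all $t$.

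Finally, integrate over $x$ using the displayed identity $\int_{SM}A^\vee(S^\vee\circ\phi_t)\,dm_0=V_0^{-1}\int_MF_{x,A,S}(t)\,dV_{g_0}$. At $t=0$ the same integral equals $\int_{SM}A^\vee S^\vee\,dm_0$, which yields \eqref{eq:correlation-factor}. The lift to $\mathbb H^n$ is harmless, since $F$ depends only on the lifted data along geodesics from the chosen lift of $x$.

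The only delicate step is the smoothness and evenness of $F$ at $r=0$, which is what excludes the singular branch $r^{-(n+2)}$. This follows from smoothness of $S$ and of the geodesic flow, together with the fact that Lemma~\ref{hd:polar-laplace} holds on $r>0$ and we only match there.
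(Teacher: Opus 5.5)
Your proof is correct and follows the paper's argument. You pair the radial equation with the degree-two spherical harmonic $A_x(\xi,\xi)$, use $\coth^2 r-\sinh^{-2}r=1$, get evenness from $\xi\mapsto-\xi$, select the regular branch at indicial root $0$ rather than $-(n+2)$, and then integrate over $M$. The only variation is in uniqueness: you exclude the singular branch by boundedness via the Frobenius exponents, while the paper uses a Volterra-type contraction $f=f(0)-c\,Kf$ on a small interval, and both arguments are valid.
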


\begin{proof}
Fix $x\in M$ and abbreviate $F(t)=F_{x,A,S}(t)$.
The geodesic flow is smooth, so $F$ is smooth.  Replacing $\xi$ by
$-\xi$ changes $\gamma_{x,\xi}(t)$ to $\gamma_{x,\xi}(-t)$ and
reverses its velocity.  Since both tensor evaluations are quadratic,
this substitution gives $F(-t)=F(t)$ and hence $F'(0)=0$.

Set $Y_{A_x}(\xi)=A_x(\xi,\xi)$.  In the polar coordinates above,
\[
 F(r)=\int_{S_xM}Y_{A_x}(\xi)U(r,\xi)\,d\sigma_x(\xi).
\]
Since $A_x$ is trace-free, $Y_{A_x}$ is a spherical harmonic of
degree two and $\Delta_\sigma Y_{A_x}=2nY_{A_x}$.  Thus integration
by parts on the sphere gives
\[
 \int_{S_xM}Y_{A_x}\Delta_\sigma U\,d\sigma_x
 =\int_{S_xM}(\Delta_\sigma Y_{A_x})U\,d\sigma_x
 =2nF(r).
\]
Averaging \eqref{hd:polar-equation} against $Y_{A_x}$ and
differentiating under the integral in $r$ gives
\[
 F''+(n+3)\coth r\,F'
 +\left(\lambda+2n\coth^2r-\frac{2n}{\sinh^2r}\right)F=0.
\]
Since $\coth^2r-(\sinh r)^{-2}=1$, this is precisely the
differential equation in \eqref{hd:correlation-ode}.

Because $\coth r$ is singular at zero, we prove uniqueness among
smooth even solutions directly.
Put $c=\lambda+2n$ and $p(r)=(\sinh r)^{n+3}$.  A smooth solution
$f$ satisfies $(pf')'=-cpf$ and $p(r)f'(r)\to0$ as $r\downarrow0$.
Integrating twice gives
\[
 f(r)=f(0)-c(Kf)(r),\qquad
 (Kf)(r)=\int_0^r\frac{1}{p(u)}\int_0^u p(v)f(v)\,dv\,du.
\]
Monotonicity of $p$ gives the estimate
\[
 \|Kh\|_{C[0,\varepsilon]}
 \leq \frac{\varepsilon^2}{2}\|h\|_{C[0,\varepsilon]}.
\]
For $|c|\varepsilon^2/2<1$, the map $h\mapsto b-cKh$ is a
contraction.  Thus at most one smooth solution has initial value $b$.
For existence, the regular-singular
equation has indicial roots $0$ and $-(n+2)$.  Its regular branch
has a convergent even power series with arbitrary constant term $b$.  Continuation for the linear equation on $(0,\infty)$
and even reflection give the global solution; taking $b=1$
defines $\varphi_{n,\lambda}$.

By linearity, $F(0)\varphi_{n,\lambda}$ solves the same equation as
$F$ and has the same value at zero.  Uniqueness and evenness thus give
\begin{equation*}
 F_{x,A,S}(t)=\varphi_{n,\lambda}(t)F_{x,A,S}(0)
 \qquad(t\in\mathbb R).
\end{equation*}
Integrating over $x$ with measure
$dV_{g_0}/V_0$ proves \eqref{eq:correlation-factor}.
\end{proof}

We now integrate $\varphi_{n,\lambda}$ to obtain the covariance.

\section{The covariance multiplier}\label{hd:multiplier-section}

\subsection{Solving and integrating the radial equation}

We solve
\[
 \varphi''+(n+3)\coth t\,\varphi'+(\lambda+2n)\varphi=0,
 \qquad \varphi(0)=1,\quad\varphi'(0)=0.
\]
Set $x=\cosh t$. The differential equation becomes
\[
 (x^2-1)\varphi_{xx}+(n+4)x\varphi_x+(\lambda+2n)\varphi=0.
\]
With $z=(1-x)/2=-\sinh^2(t/2)$, this is the hypergeometric equation
\begin{equation*}
 z(1-z)\varphi_{zz}
 +\bigl[c-(q_++q_-+1)z\bigr]\varphi_z-q_+q_-\varphi=0,
\end{equation*}
where
\[
 a=\frac{n+3}{2},\qquad c=\frac{n+4}{2},\qquad
 q_\pm=a\pm\mu,\qquad
 \mu^2=a^2-(\lambda+2n)=\frac{(n-1)^2+8}{4}-\lambda.
\]
Thus the regular solution is
${}_2F_1(q_+,q_-;c; -\sinh^2(t/2))$
(see \cite{AAR}). As in the introduction, $\mu$ is nonnegative real
or has positive imaginary part.

For $\Re z>0$, define
\[
 \begin{aligned}
 J(z)&=\int_{\mathbb R}(\cosh r)^{-z}\,dr
      =\int_{-1}^1(1-u^2)^{z/2-1}\,du\\
     &=\sqrt\pi\,\frac{\Gamma(z/2)}{\Gamma((z+1)/2)}.
 \end{aligned}
\]
The substitution $u=\tanh r$, followed by the beta integral, gives
these identities. The radial solution has the representation
\begin{equation}\label{hd:spherical-integral}
 \varphi_{n,\lambda}(t)=\frac{1}{J(2a)}
 \int_{-1}^1(\cosh t-u\sinh t)^{-q_+}(1-u^2)^{a-1}\,du.
\end{equation}
The right-hand side is smooth and even in $t$, and its value at zero
is $1$.
The base of the complex power is positive, so we use its real
logarithm. Put $D=\cosh t-u\sinh t$ and $w=(1-u^2)^{a-1}$.
Direct differentiation gives, for $t\ne0$,
\[
 w\bigl(\partial_t^2+2a\coth t\,\partial_t+q_+(2a-q_+)\bigr)D^{-q_+}
 =-\frac{q_+}{\sinh t}\partial_u\bigl((1-u^2)^aD^{-q_+-1}\bigr).
\]
The boundary term at $u=\pm1$ vanishes. Since $2a=n+3$ and
$q_+(2a-q_+)=\lambda+2n$, integration proves the radial equation.
Uniqueness from Proposition~\ref{pro:correlation} then gives
\eqref{hd:spherical-integral}.

\begin{theorem}\label{thm:covariance-multiplier}
Let $0\ne S\in\TT$ satisfy $\Delta S=\lambda S$, and let $A$ be any
smooth trace-free symmetric two-tensor. Then
\begin{equation}\label{hd:covariance-final}
 \Cov_0(A^\vee,S^\vee)=\kappa_n(\lambda)
       \int_{SM}A^\vee S^\vee\,dm_0,
 \qquad
 \kappa_n(\lambda)=\frac{J(a+\mu)J(a-\mu)}{J(n+3)}.
\end{equation}
In particular, this proves Theorem~\ref{thm:intro-covariance}.
\end{theorem}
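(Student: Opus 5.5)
The plan is to combine the Green--Kubo formula \eqref{eq:cov2} with Proposition~\ref{pro:correlation}, and then evaluate one explicit integral using the representation \eqref{hd:spherical-integral}.

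\emph{Step 1 (reduction to a time integral).} Since $A$ and $S$ are trace-free, fiber integration gives $\overline{A^\vee}=\overline{S^\vee}=0$. Hence \eqref{eq:cov2} reads
\[
\Cov_0(A^\vee,S^\vee)=\int_{-\infty}^{\infty}\int_{SM}A^\vee\,(S^\vee\circ\phi_t)\,dm_0\,dt .
\]
By \eqref{eq:correlation-factor} the inner integral equals $\varphi_{n,\lambda}(t)\int_{SM}A^\vee S^\vee\,dm_0$. Exponential mixing makes the outer integral absolutely convergent when $\int A^\vee S^\vee\,dm_0\neq0$. To cover all $A$, I will show directly that $\varphi_{n,\lambda}\in L^1(\mathbb R)$; this is done in Step 2. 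It then follows that
\[
\kappa_n(\lambda)=\int_{\mathbb R}\varphi_{n,\lambda}(t)\,dt .
\]

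\emph{Step 2 (evaluating the integral).} Insert \eqref{hd:spherical-integral} and use Fubini. Fix $u=\tanh s$. Then
\[
\cosh t-u\sinh t=\frac{\cosh(t-s)}{\cosh s},\qquad (1-u^2)^{a-1}\,du=(\cosh s)^{-2a}\,ds .
\]
The double integral therefore becomes
\[
\frac{1}{J(2a)}\int\!\!\int(\cosh s)^{q_+-2a}\,(\cosh(t-s))^{-q_+}\,ds\,dt .
\]
Now substitute $t'=t-s$. The integral factorizes as $J(2a-q_+)\,J(q_+)/J(2a)$, which is $J(a-\mu)J(a+\mu)/J(n+3)$. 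This is exactly the claimed formula.

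\emph{Main obstacle: convergence and justification of Fubini.} The factorization requires $\Re(a\pm\mu)>0$, that is $|\Re\mu|<a$. By Proposition~\ref{hd:bochner} we have $\lambda\ge n$, so
\[
\mu^2\le\frac{(n-1)^2+8}{4}-n=\frac{(n-3)^2+8}{4}.
\]
When $\mu$ is real this gives $\mu<a=\frac{n+3}{2}$, since $(n-3)^2+8<(n+3)^2$. When $\mu$ is imaginary, $\Re(a\pm\mu)=a>0$. In both cases the integrand $(\cosh s)^{\Re q_+-2a}(\cosh t')^{-\Re q_+}$ is absolutely integrable, so Tonelli applies to the absolute values. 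This also shows $\varphi\in L^1(\mathbb R)$, which justifies the interchange of the $t$ and $u$ integrals and the evenness/measure bookkeeping in Step 1.

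Specializing to $A=S$ gives Theorem~\ref{thm:intro-covariance}. The symmetry of the formula under $\mu\mapsto-\mu$ is immediate from the final expression.
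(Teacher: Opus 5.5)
Your proposal is correct and follows the paper's proof essentially step for step. The substitution $u=\tanh s$ turns \eqref{hd:spherical-integral} into $\frac{1}{J(2a)}\int(\cosh s)^{-q_-}(\cosh(t-s))^{-q_+}\,ds$, Tonelli after the shift $t'=t-s$ gives $\varphi_{n,\lambda}\in L^1(\mathbb R)$ and $\int\varphi_{n,\lambda}=J(q_-)J(q_+)/J(2a)$, and then \eqref{eq:cov2} is combined with Proposition~\ref{pro:correlation}.

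One small arithmetic slip: in fact $\frac{(n-1)^2+8}{4}-n=\frac{(n-3)^2}{4}$, not $\frac{(n-3)^2+8}{4}$. So $0\le\mu\le a-3$ when $\mu$ is real, as in the paper. Your weaker bound is still true and still gives $\mu<a$, so the argument is unaffected.
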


\begin{proof}
Substituting $u=\tanh r$ in \eqref{hd:spherical-integral} gives
\[
 \varphi_{n,\lambda}(t)=\frac{1}{J(2a)}
 \int_{\mathbb R}(\cosh r)^{-q_-}
                    (\cosh(r-t))^{-q_+}\,dr.
\]
We first justify absolute convergence of the time integral.
If $\mu$ is purely imaginary, $\Re q_\pm=a>0$.
If $\mu$ is real, $\lambda\geq n$ implies
$0\leq\mu\leq(n-3)/2=a-3$, so $q_\pm\geq3$.
Consequently, the substitution $s=r-t$ gives
\[
 \int_{\mathbb R^2}
 \left|(\cosh r)^{-q_-}(\cosh(r-t))^{-q_+}\right|\,dr\,dt
 =J(\Re q_-)J(\Re q_+)<\infty.
\]
Thus $\varphi_{n,\lambda}\in L^1(\mathbb R)$, and Fubini's theorem
gives
\begin{equation*}
 \int_{\mathbb R}\varphi_{n,\lambda}(t)\,dt
 =\frac{J(q_-)J(q_+)}{J(2a)}=\kappa_n(\lambda).
\end{equation*}
Both evaluation functions have zero mean. Combining
\eqref{eq:cov2} with Proposition~\ref{pro:correlation} yields
\[
 \begin{aligned}
 \Cov_0(A^\vee,S^\vee)
 &=\int_{\mathbb R}\int_{SM}A^\vee(S^\vee\circ\phi_t)\,dm_0\,dt\\
 &=\left(\int_{\mathbb R}\varphi_{n,\lambda}(t)\,dt\right)
       \int_{SM}A^\vee S^\vee\,dm_0,
 \end{aligned}
\]
which proves the formula.
\end{proof}

For real $\mu$, both arguments of $J$ are positive, and for purely
imaginary $\mu$ they are complex conjugates. Since the Gamma
function has no zeros and no poles in the right half-plane,
$\kappa_n(\lambda)>0$ in both cases.
Equations~\eqref{hd:fiber} and \eqref{hd:covariance-final} also show
that $L^2(M)$-orthogonal TT eigentensors have zero covariance,
including those in the same eigenspace.

\subsection{Strict monotonicity}

Set $z=\mu^2=a^2-(\lambda+2n)$ and
$\lambda_c=2+(n-1)^2/4$. The Gamma product gives
\[
 \frac{\kappa_n(\lambda)}{\kappa_n(\lambda_c)}
 =\frac{J(a+\mu)J(a-\mu)}{J(a)^2}.
\]
The Weierstrass product for the Gamma function therefore yields
\begin{equation*}
 \frac{\kappa_n(\lambda)}{\kappa_n(\lambda_c)}
 =\prod_{k=0}^\infty
   \frac{1-z/(a+1+2k)^2}{1-z/(a+2k)^2}.
\end{equation*}
For $\lambda\geq n$ we have
$z\leq(a-3)^2<a^2$, so all factors are positive.
On compact subintervals of $z<a^2$, the logarithms of the factors
and their derivatives are uniformly $O(k^{-3})$. Thus their series
converge locally uniformly and may be differentiated term by term:
\begin{equation*}
 \frac{d}{dz}\log\kappa_n
 =\sum_{k=0}^\infty\left(
   \frac{1}{(a+2k)^2-z}-\frac{1}{(a+1+2k)^2-z}\right)>0.
\end{equation*}
Since $dz/d\lambda=-1$, the multiplier is strictly decreasing on
the entire interval $[n,\infty)$.

For large $\lambda$, write $\mu=i\tau$ with
$\tau=\sqrt{\lambda-\lambda_c}$. Stirling's formula gives
$|J(a+i\tau)|^2=(2\pi/\tau)(1+O(\tau^{-2}))$, and hence
\begin{equation}\label{hd:high-frequency}
 \kappa_n(\lambda)=\frac{2\pi}{J(n+3)\sqrt\lambda}
                    \bigl(1+O(\lambda^{-1})\bigr)
 \qquad(\lambda\to\infty).
\end{equation}

\subsection{The change of sign at dimension four}

At $\lambda=n$, the arguments $a+\mu$ and $a-\mu$ are $n$ and $3$.
Thus
\begin{equation}\label{hd:bottom-value}
 K_n:=\kappa_n(n)=\frac{J(3)J(n)}{J(n+3)},\qquad J(3)=\frac\pi2.
\end{equation}
In dimension three, $K_3=15\pi^2/64<5/2$. Strict monotonicity
makes \eqref{hd:main-mode} negative on every nonzero TT eigenmode.
The Gamma recurrence and reflection formulas also give
the elementary expression
\[
 \kappa_3(3+\tau^2)=\frac{15\pi}{8\tau}
       \frac{1+\tau^2}{4+\tau^2}\tanh\frac{\pi\tau}{2},
 \qquad\tau\geq0,
\]
with value $15\pi^2/64$ at $\tau=0$ by continuity.

For $n\geq4$, set $R_n=(n-1)K_n/(n+2)$.
The recurrence $J(z+2)=zJ(z)/(z+1)$ gives
\begin{equation}\label{hd:bottom-recurrence}
 \begin{gathered}
 R_4=\frac{16}{15}>1,\qquad R_5=\frac{15\pi^2}{128}>1,\\
 \frac{R_{n+2}}{R_n}=\frac{n(n+2)}{(n-1)(n+3)}>1.
 \end{gathered}
\end{equation}
Both parity sequences therefore satisfy $R_n>1$.
Strict monotonicity and \eqref{hd:high-frequency} give a unique
$\lambda_*(n)>n$ at which $\kappa_n(\lambda_*(n))=(n+2)/(n-1)$.

\begin{proof}[Completion of the proof of Theorem~\ref{hd:main}]
Insert \eqref{hd:covariance-final} and \eqref{hd:fiber} into
\eqref{eq:main-hessian} to obtain \eqref{hd:main-mode}.
The preceding comparisons determine the sign on every eigenspace.
By Section~\ref{sec:criterion}, there is an $L^2(M)$-orthonormal
TT eigenbasis $\{S_j\}$ with eigenvalues $\lambda_j\to\infty$.
Orthogonal eigenmodes have zero cross-covariance, so the Hessian is
diagonal in this basis.

For a smooth TT tensor $S=\sum_js_jS_j$, its partial sums converge
in every Sobolev norm, as do their images under $P$.
Continuity of covariance in a fixed H\"older norm permits passage
to the limit in \eqref{eq:main-hessian}. The resulting formula is
\begin{equation}\label{eq:full-spectral-hessian}
 \cH(S,S)=\frac{1}{4n(n+2)V_0}
 \sum_j(\lambda_j-2)
 \bigl((n-1)\kappa_n(\lambda_j)-(n+2)\bigr)s_j^2.
\end{equation}
By \eqref{hd:high-frequency}, the coefficient of $s_j^2$ is
$-\lambda_j/(4nV_0)+O(\sqrt{\lambda_j})$.
In particular, its absolute value is bounded by $C(1+\lambda_j)$,
so the series converges absolutely and extends the Hessian
continuously to $H^1\cap\overline{\TT}^{L^2}$.
In dimension three every coefficient is negative. In higher
dimensions, only finitely many eigenvalues lie at or below
$\lambda_*(n)$, while infinitely many lie above it. This proves
all the assertions.
\end{proof}

The values $n$ and $\lambda_*(n)$ need not be eigenvalues on a given
manifold. The threshold determines the sign of each eigenspace
that is present.

\section{The local entropy bound in dimension three}\label{sec:three}

Throughout this section, $n=3$. We derive a uniform $H^1$ bound for
the Hessian and estimate the nonlinear remainder near $g_0$.

\begin{proposition}
\label{thm:negative-definite-3d}\label{cor:coercive-3d}
Let
\[
 \eta=5-\frac{15\pi^2}{32}>0,\qquad c_0=\frac{\eta}{240V_0}.
\]
Then
\begin{equation}\label{eq:negative-definite-3d}
 \mathcal H(S,S)\leq-c_0\|S\|_{H^1}^2\qquad(S\in\TT).
\end{equation}
\end{proposition}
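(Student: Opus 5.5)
The plan is to work entirely from the diagonal spectral representation \eqref{eq:full-spectral-hessian} established in the completion of the proof of Theorem~\ref{hd:main}, specialized to $n=3$. For a smooth $S\in\TT$ with expansion $S=\sum_j s_jS_j$ in the $L^2$-orthonormal TT eigenbasis, that formula reads
\[
 \cH(S,S)=\frac{1}{60V_0}\sum_j(\lambda_j-2)\bigl(2\kappa_3(\lambda_j)-5\bigr)s_j^2 .
\]
On the other side, the Sobolev norm defined in Section~\ref{sec:criterion} gives $\|S\|_{H^1}^2=\|(1+\Delta)^{1/2}S\|_{L^2}^2=\sum_j(1+\lambda_j)s_j^2$. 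So \eqref{eq:negative-definite-3d} reduces to a termwise inequality between the coefficients, namely
\[
 \frac{(\lambda_j-2)\bigl(2\kappa_3(\lambda_j)-5\bigr)}{60V_0}\;\leq\;-\frac{\eta}{240V_0}(1+\lambda_j)
\]
for every eigenvalue $\lambda_j$.

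The first step is a uniform bound on the multiplier. By Proposition~\ref{hd:bochner}, $\lambda_j\geq 3$. Since $\kappa_3$ is strictly decreasing on $[3,\infty)$, we get $\kappa_3(\lambda_j)\leq K_3=15\pi^2/64$ from \eqref{hd:bottom-value}. Hence
\[
 2\kappa_3(\lambda_j)-5\leq \frac{15\pi^2}{32}-5=-\eta<0 .
\]
Here $\eta>0$ because $\pi^2<32/3$. Because $\lambda_j-2>0$, the coefficient is then at most $-\eta(\lambda_j-2)/(60V_0)$.

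The second step converts the factor $\lambda_j-2$ into the $H^1$ weight $1+\lambda_j$. The required inequality $(\lambda_j-2)/60\geq(1+\lambda_j)/240$ is equivalent to $4\lambda_j-8\geq 1+\lambda_j$, that is, $\lambda_j\geq 3$. This again follows from the spectral lower bound of Proposition~\ref{hd:bochner}, and it is exactly what produces the constant $c_0=\eta/(240V_0)$. Summing over $j$ gives \eqref{eq:negative-definite-3d}.

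The only delicate point is justifying termwise summation. Absolute convergence of \eqref{eq:full-spectral-hessian} for smooth $S$ was already established, since the coefficients are $O(1+\lambda_j)$ and the $s_j$ decay rapidly. So no real obstacle remains. The substantive inputs are monotonicity of $\kappa_3$ and the numerical value $K_3<5/2$, both already proved.
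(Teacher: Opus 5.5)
Your proposal is correct and follows the paper's proof. The paper likewise bounds each eigenmode using $\kappa_3(\lambda)\le K_3=15\pi^2/64$ and $\lambda-2\ge(1+\lambda)/4$ for $\lambda\ge3$, and then sums via the diagonal spectral formula \eqref{eq:full-spectral-hessian}, with the same constants.
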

\begin{proof}
For an eigentensor with eigenvalue $\lambda\geq3$,
\eqref{hd:main-mode}, \eqref{hd:fiber}, and
$\kappa_3(\lambda)\leq K_3=15\pi^2/64$ give
\[
 \mathcal H(S,S)
 \leq-\frac{\eta}{60V_0}(\lambda-2)\|S\|_{L^2}^2
 \leq-\frac{\eta}{240V_0}(1+\lambda)\|S\|_{L^2}^2.
\]
The last inequality is $\lambda-2\geq(1+\lambda)/4$ for $\lambda\geq3$.
Sum using \eqref{eq:full-spectral-hessian} to obtain
\eqref{eq:negative-definite-3d}.
\end{proof}

For the local coordinates below, write
\[
 \TT^r=\{S\in H^r(M;S^2T^*M):\tr S=0,\ \delta S=0\},
\]
where the equations are understood weakly. This is a closed Hilbert
subspace of $H^r$, and smooth TT tensors are dense in it by the spectral
decomposition. We use $\TT^8$ as the domain for the implicit function theorem.

\begin{lemma}
\label{lem:constraint-coordinates}
In the affine slice $\delta(g-g_0)=0$, the metrics with scalar curvature
$-6$ near $g_0$ admit a smooth $H^8$ parametrization
\[
 G(S)=g_0+S+\mathcal R w(S),\qquad S\in\TT^8,
 \qquad w(0)=0,\quad Dw(0)=0.
\]
Here $w(S)\in H^6$, and $\mathcal R:H^6\to H^8$ is a fixed bounded
linear operator. Every smooth metric sufficiently close to $g_0$ with
scalar curvature $-6$ is, after pullback by a diffeomorphism, of this
form. Its coordinate $S$ is small in $H^{22}$ when the original metric
is sufficiently close in the smooth topology.
\end{lemma}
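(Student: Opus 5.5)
We will build the parametrization with the implicit function theorem in the divergence-free affine slice, using a slice decomposition adapted to the linearized scalar curvature. Take $n=3$. Write $g=g_0+H$ with $\delta H=0$ and split $H$ into a TT part $S$ and a correction lying in the image of a fixed lift. The natural correction space consists of divergence-free tensors determined by their trace. Concretely, for a function $f$ put
\[
 \mathcal R f=fg_0-\nabla d\,\Delta_0^{-1}\!f+\ldots,
\]
chosen as the divergence-free part of $fg_0$. We would take $\mathcal R f = fg_0 + \mathcal L_Y g_0$, where $Y$ solves $\delta\mathcal L_Yg_0=-\delta(fg_0)=df$. This vector equation is elliptic and invertible because there are no Killing fields, exactly as in the TT-gauge argument of Section~\ref{sec:constraint}. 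Then $Y\in H^7$ for $f\in H^6$, but $\mathcal L_Yg_0$ is only in $H^6$. To reach $H^8$ we instead precompose with a smoothing: define $\mathcal R f$ as the divergence-free part of $\bigl((\Delta+1)^{-1}f\bigr)g_0$. This makes $\mathcal R:H^6\to H^8$ bounded and linear, with $\delta\mathcal R=0$.

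Next consider the map
\[
 \Psi(S,w)=\Sc\bigl(g_0+S+\mathcal Rw\bigr)+6,\qquad \TT^8\times H^6\to H^6.
\]
It is smooth because $\Sc$ is a polynomial in the metric, its inverse and two derivatives, and $H^6$ is an algebra in dimension three. By \eqref{eq:scalar-linearization-hyperbolic}, $D_S\Psi(0,0)=0$ on $\TT$. Also $D_w\Psi(0,0)w=D\Sc_{g_0}(\mathcal R w)$. Since $\mathcal Rw$ is divergence-free and $\delta\delta$ kills it, this equals $(\Delta+2)\tr\mathcal Rw$. The main computation is to check that $w\mapsto(\Delta+2)\tr\mathcal R w$ is an isomorphism $H^6\to H^6$.

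From the construction, $\tr\mathcal R w=3u+2\operatorname{div}Y$ with $u=(\Delta+1)^{-1}w$. Taking the trace of the divergence condition expresses $\operatorname{div}Y$ in terms of $u$ through a scalar operator: on gradient fields $Y=\nabla\psi$, the equation $\delta\mathcal L_Y g_0=du$ reduces, using the Ricci term $-2$, to a scalar elliptic equation for $\psi$. So $\tr\mathcal R w=\Theta(\Delta)u$ for an explicit rational function $\Theta$, and invertibility reduces to $\Theta$ having no zero on the spectrum $[0,\infty)$. We expect this spectral check to be the main obstacle. If $\Theta$ vanishes somewhere, we would fall back on a Fredholm argument: replace the scalar factor by a slightly different elliptic multiplier, or absorb the finite-dimensional kernel into the target. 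The implicit function theorem then gives a smooth $w(S)$ with $w(0)=0$, and differentiating $\Psi(S,w(S))=0$ at $0$ gives $Dw(0)=0$ since $D_S\Psi(0,0)=0$.

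For the converse, take a smooth $g$ close to $g_0$ with $\Sc_g=-6$. First apply the slice theorem of Ebin type: the nonlinear gauge $\delta(\varphi^*g-g_0)=0$ is solved by the implicit function theorem, again using invertibility of $\delta\mathcal L$. Its size is controlled in high Sobolev norms by the smooth distance of $g$ to $g_0$. Decompose $H=\varphi^*g-g_0$ as its TT part plus the divergence-free remainder, which has the form $\mathcal R w'$ for some $w'$, because divergence-free tensors decompose into TT plus the trace-generated piece. Uniqueness in the implicit function theorem then forces $w'=w(S)$. Smallness of $S$ in $H^{22}$ follows because the TT projection $\Pi$ is an order-zero pseudodifferential operator, so it is bounded on every $H^r$, and the gauge-fixed $H$ is small in $H^{22}$.
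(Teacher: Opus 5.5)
Your proposal follows the paper's approach. The one step you left open, invertibility of $D_w\Psi(0,0)$, does go through. With $u=(\Delta+1)^{-1}w$, the unique solution of $\delta\mathcal L_Yg_0=du$ is the gradient field $Y=\nabla\psi$ with $\psi=\tfrac12(\Delta+2)^{-1}u$. This uses $\delta\Hess\psi=d(\Delta+2)\psi$ when $\Ric=-2g_0$, and uniqueness holds because there are no Killing fields.

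So your lift is $\mathcal Rw=ug_0+\Hess f$ with $f=(\Delta+2)^{-1}u$. This is exactly $L^*f$, where $L=D\Sc_{g_0}$ and $L^*f=\Hess f+(\Delta f)g_0+2fg_0$. Hence $\tr\mathcal Rw=3u-\Delta f=2(\Delta+3)(\Delta+2)^{-1}u$. Your multiplier is therefore $\Theta(\Delta)=2(\Delta+3)/(\Delta+2)$, which is positive on $[0,\infty)$. It follows that $D_w\Psi(0,0)=(\Delta+2)\tr\mathcal R=2(\Delta+3)(\Delta+1)^{-1}$, an isomorphism of $H^6$. No fallback is needed, and absorbing a kernel into the target would not have produced the stated form anyway.

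The paper takes the same range, divergence-free tensors of the form $L^*f$, but normalizes the lift as $\mathcal R=L^*(LL^*)^{-1}$ with $LL^*=2(\Delta+2)(\Delta+3)$. Then $L\mathcal R=\operatorname{Id}$ by construction, so the derivative in $w$ is the identity and no spectral check is required. Your converse decomposition $S=H-\mathcal R(L\mathcal R)^{-1}LH$ coincides with the paper's $S=H-\mathcal RLH$, because $\mathcal R(L\mathcal R)^{-1}$ is exactly the paper's $\mathcal R$. The remaining steps also match the paper: the Ebin-type slice via the implicit function theorem, uniqueness forcing $w'=w(S)$, and boundedness of the TT projection on $H^{22}$.
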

\begin{proof}
Let $L=D\Sc_{g_0}$. Its formal adjoint on functions is
\[
 L^*f=\Hess f+(\Delta f)g_0+2fg_0.
\]
Diffeomorphism invariance of scalar curvature gives $\delta L^*f=0$.
Its trace is $2(\Delta+3)f$, and on divergence-free tensors
$LH=(\Delta+2)\tr H$. Thus
\[
 LL^*=2(\Delta+2)(\Delta+3),\qquad
 \mathcal R=L^*\bigl[2(\Delta+2)(\Delta+3)\bigr]^{-1}
\]
satisfies $L\mathcal R=\operatorname{Id}$ and
$\delta\mathcal R=0$. Elliptic estimates make $\mathcal R$ bounded
from $H^{r-2}$ to $H^r$. Every divergence-free tensor $H\in H^r$ decomposes as
\[
 H=S+\mathcal R LH,\qquad S=H-\mathcal R LH\in\TT^r.
\]
The equations $LS=0$ and $\delta S=0$ give
$(\Delta+2)\tr S=0$, hence $\tr S=0$.

The map
\[
 (S,w)\longmapsto\Sc(g_0+S+\mathcal R w)+6,
 \qquad \TT^8\times H^6\longrightarrow H^6,
\]
is smooth on the open set of positive metrics. Its derivative in $w$
at zero is the identity. The Banach implicit function theorem gives
$w(S)$, and differentiation in a TT direction gives $Dw(0)=0$.
The decomposition above and uniqueness in that theorem show that
$G$ parametrizes all nearby constrained metrics in the affine slice.
The constructions at higher Sobolev regularity agree wherever both
are defined, by uniqueness.

The slice is obtained by solving
$\delta(\psi^*g-g_0)=0$ near $\psi=\operatorname{Id}$.
Its linearization in the vector field generating $\psi$ is
$X\mapsto\delta\mathcal L_Xg_0$. This elliptic operator has zero
kernel, as proved in Section~\ref{sec:constraint}. With $g$ in
$C^{25,1/2}$, the equation defines a $C^1$ map from vector fields in
$C^{24,1/2}$ to one-forms in $C^{22,1/2}$, and its linearization is an
isomorphism between these spaces. The implicit function theorem
therefore gives a nearby $\psi$, continuously depending on $g$.
The pullback is close in $C^{23,1/2}$, hence in $H^{22}$.
Since $H\mapsto H-\mathcal R LH$ is bounded on $H^{22}$, its TT
coordinate is also small in $H^{22}$. For smooth metrics, elliptic
regularity for this gauge equation gives a smooth diffeomorphism.
\end{proof}

\begin{lemma}
\label{lem:entropy-remainder}
After shrinking the coordinate neighborhood in
Lemma~\ref{lem:constraint-coordinates}, the metrics $G(S)$ have
negative sectional curvature, and the function $F(S)=h(G(S))$ is $C^3$ near zero in $\TT^8$.
There is a constant $C$ such that
\begin{equation}\label{eq:uniform-remainder}
 \left|F(S)-2-\tfrac12\mathcal H(S,S)\right|
 \leq C\|S\|_{H^1}^2\|S\|_{H^{22}}
\end{equation}
for smooth $S$ sufficiently small in $H^{22}$.
\end{lemma}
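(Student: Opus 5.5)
Negative curvature of $G(S)$ near $S=0$ is immediate. $G$ is smooth from $\TT^8$ to $H^8\subset C^{5,\alpha}$ in dimension three, and negative sectional curvature is $C^2$-open. For regularity of $F$ I will use the smooth (real-analytic in the $C^k$ setting) dependence of topological entropy on the metric from \cite{KKPW}. That result comes from structural stability of Anosov flows and thermodynamic formalism: $h(g)$ is the zero of the pressure $s\mapsto P(-s\,\varphi^u_g)$, where $\varphi^u_g$ is the unstable Jacobian potential. The unstable bundle depends on the $C^k$ metric with loss of finitely many derivatives, and $H^8\hookrightarrow C^{6,1/2}$ leaves room for three derivatives. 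Composing with the smooth map $G$ therefore gives $F\in C^3$ near $0$ in $\TT^8$.

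For the expansion I take the Taylor formula with integral remainder along $s\mapsto F(sS)$. We have $F(0)=2$. Also $DF(0)=0$, since the curve $s\mapsto G(sS)$ is a constrained curve with velocity $S\in\TT$ (because $Dw(0)=0$) and $g_0$ is critical on $\Met_{-6}$ by Section~\ref{sec:hessian}. Theorem~\ref{thm:constrained-hessian} gives $D^2F(0)(S,S)=\mathcal H(S,S)$, and it applies because this is a $C^2$ constrained curve with TT initial velocity. Hence
\[
F(S)-2-\tfrac12\mathcal H(S,S)=\int_0^1\frac{(1-s)^2}{2}\,D^3F(sS)(S,S,S)\,ds .
\]
The estimate \eqref{eq:uniform-remainder} then reduces to a trilinear bound
\[
|D^3F(T)(S_1,S_2,S_3)|\le C\|S_1\|_{H^1}\|S_2\|_{H^1}\|S_3\|_{H^{22}}
\]
holding uniformly for $T$ small in $H^{22}$.

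This anisotropic third-derivative bound is the main obstacle. The $C^3$ statement only controls $D^3F$ in the $H^8$ norm in all three slots, which is useless for the $H^1$-coercivity needed later. My approach is to write the second derivative at a nearby base point $G(T)$ by the same formula as at $g_0$. That means Flaminio's covariance formula, taken now with the Bowen--Margulis measure and geodesic flow of $G(T)$, together with the volume-correction terms from Section~\ref{sec:hessian}. Schematically,
\[
D^2F(T)(S_1,S_2)=\Cov_{G(T)}\bigl(\mathcal A_T S_1,\mathcal B_T S_2\bigr)+\ip{\mathcal C_TS_1}{S_2}+\mathcal E_T(S_1,S_2),
\]
where $\mathcal A_T,\mathcal C_T$ are second-order differential operators with coefficients depending smoothly on $T$ in $C^{k}$. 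The term $\mathcal E_T$ collects the contributions of $D^2w$ and of the first variation, through $\mathcal R D^2w(T)(S_1,S_2)$, and is bounded by $\|S_1\|_{H^1}\|S_2\|_{H^1}$ after a $\|T\|_{H^{22}}$-smallness argument using $\mathcal R:H^{r-2}\to H^r$.

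Differentiating in $T$ in the direction $S_3$ hits only coefficients and the measure and flow. Those derivatives are controlled by $\|S_3\|_{C^{k}}\lesssim\|S_3\|_{H^{22}}$, using the Hölder continuity of covariance in the Anosov data. The two remaining slots enter bilinearly. I will integrate by parts so that each carries one derivative: the covariance is a pseudodifferential pairing of order $-1$ (Section~\ref{sec:xray}), uniformly in nearby Anosov metrics by \cite{GKL}, and it composes with the order-two operators to give order one. The $L^2$ terms of order two become $H^1\times H^1$ pairings. Symmetrizing in the three slots, and noting that $D^3F$ is symmetric so the $H^{22}$ slot can be placed on whichever $S_i$ is convenient, yields the trilinear bound. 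For smooth $S$ this gives \eqref{eq:uniform-remainder}.
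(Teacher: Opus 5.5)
Your regularity paragraph and your Taylor set-up agree with the paper: $F(0)=2$, $DF(0)=0$, and $D^2F(0)(S,S)=\mathcal H(S,S)$ along $s\mapsto G(sS)$. The gap is the trilinear bound you call the main obstacle, which you do not prove, and your sketch for it does not hold up.

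\begin{itemize}
\item Flaminio's formula in the form of Proposition~\ref{prop:flaminio} is special to the hyperbolic base point. There the Bowen--Margulis measure is Liouville, and constant curvature makes the variation of the unstable Jacobian an explicit local expression in $S$. At $G(T)$ the second variation involves the derivatives of the Bowen--Margulis measure and of the unstable bundle. These are nonlocal and only H\"older along $SM$, so there are no second-order differential operators $\mathcal A_T,\mathcal B_T$ with coefficients depending smoothly on $T$.
\item The order $-1$ statement from \cite{GKL} concerns $\Pi_2$ on pullbacks of tensors for one fixed Anosov metric. You would need it for other observables, uniformly in the metric, and differentiated in the metric. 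Continuity of covariance in H\"older norms gives neither a derivative in the base metric nor any bound in which two slots carry only $H^1$ norms.
\item The bound on $\mathcal E_T$ is asserted without argument. That term pairs $\mathcal R\,D^2w(T)(S_1,S_2)$ against the Bowen--Margulis measure of $G(T)$, and nothing you say controls this pairing by $\|S_1\|_{H^1}\|S_2\|_{H^1}$.
\end{itemize}

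The missing idea is that this anisotropic estimate is unnecessary, so your claim that $H^8$ control is ``useless'' is the real error. The $C^3$ regularity on $\TT^8$ and Taylor's formula on a small ball give $|F(S)-2-\tfrac12\mathcal H(S,S)|\le C\|S\|_{H^8}^3$. Since $8=\tfrac23\cdot1+\tfrac13\cdot22$, spectral interpolation gives $\|S\|_{H^8}^3\le\|S\|_{H^1}^2\|S\|_{H^{22}}$, which is exactly \eqref{eq:uniform-remainder}. This is the paper's entire argument.

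Separately, your characterization of the entropy is wrong. For the Liouville-preserving geodesic flow, Ruelle's inequality and Pesin's formula give $P(-\varphi^u_g)=0$, so the zero of $s\mapsto P(-s\varphi^u_g)$ is $s=1$, not $h(g)$. The paper instead uses the structural-stability time change $a_g$ on the fixed flow and the equation $\mathscr P_{g_0}(-h(g)a_g)=0$, solved by the implicit function theorem in the H\"older space of $a_g$.
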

\begin{proof}
We prove $C^3$ regularity in the $H^8$ topology.
On the fixed bundle $S_{g_0}M$, the geodesic vector field depends
smoothly on the metric, losing one spatial derivative. Structural
stability gives a positive H\"older time-change function $a_g$ whose
dependence on a $C^4$ vector field is $C^3$ with values in a fixed
H\"older space (see \cite{KKPW}*{Proposition~2.2 and its proof,
pp.~591--592}). Thus $g\mapsto a_g$ is $C^3$ on a sufficiently small
$C^{5,1/2}$ neighborhood. Normalize the time change so that its integral
on a closed $g_0$ orbit is the period of the corresponding $g$ orbit.
The entropy is determined by
\[
 \mathscr P_{g_0}(-h(g)a_g)=0,
\]
where $\mathscr P_{g_0}$ is the pressure of the fixed geodesic flow.
Pressure is analytic on this H\"older space, and its derivative in the
scalar variable $h$ is $-\int a_g\,d\mu$, which is strictly negative;
here $\mu$ is the equilibrium probability measure
(see \cite{KKPW}*{Proposition~2.1 and the proof of Theorem~2,
pp.~589--590 and~594}). The implicit function theorem makes
$g\mapsto h(g)$ a $C^3$ function on this neighborhood.
This is Fr\'echet differentiability, as specified on p.~594 of the cited paper.
All the metrics are negatively curved, so topological and volume
entropy coincide.

In dimension three, $H^8$ embeds continuously into $C^{5,1/2}$.
Composing with $G$ therefore makes $F$ a $C^3$ function on $\TT^8$.
Here $DF(0)=0$ and $D^2F(0)=\mathcal H$, since $G(0)=g_0$ and
$DG(0)S=S$. Continuity of $D^3F$ bounds its operator norm on a sufficiently
small ball centered at zero. Taylor's formula on this ball gives
\[
 |F(S)-2-\tfrac12\mathcal H(S,S)|\leq C\|S\|_{H^8}^3.
\]
Sobolev interpolation, with $8=\tfrac23\cdot1+\tfrac13\cdot22$, gives
\[
 \|S\|_{H^8}^3\leq\|S\|_{H^1}^2\|S\|_{H^{22}},
\]
which proves \eqref{eq:uniform-remainder}.
\end{proof}

\begin{proposition}
\label{prop:constrained-local-maximum}
For all smooth metrics $g$ sufficiently close to $g_0$ with
$\Sc_g=-6$, one has $h(g)\leq2$. Equality holds precisely when
$g$ is a pullback of $g_0$ by a diffeomorphism.
\end{proposition}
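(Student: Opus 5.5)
The plan is to combine the coordinate chart of Lemma~\ref{lem:constraint-coordinates}, the coercive Hessian bound of Proposition~\ref{cor:coercive-3d}, and the uniform remainder estimate of Lemma~\ref{lem:entropy-remainder}.

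Let $g$ be smooth, with $\Sc_g=-6$, and sufficiently close to $g_0$ in the smooth topology. By Lemma~\ref{lem:constraint-coordinates}, there is a smooth diffeomorphism $\psi$ such that $\psi^*g=G(S)$ for a smooth $S\in\TT$. The size $\|S\|_{H^{22}}$ can be made as small as we like by shrinking the neighbourhood of $g_0$. Volume entropy is invariant under pullback by diffeomorphisms, so $h(g)=F(S)$, where $F(S)=h(G(S))$.

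Inserting \eqref{eq:negative-definite-3d} into \eqref{eq:uniform-remainder} gives
\[
 F(S)\leq 2+\tfrac12\mathcal H(S,S)+C\|S\|_{H^1}^2\|S\|_{H^{22}}
 \leq 2-\Bigl(\tfrac{c_0}{2}-C\|S\|_{H^{22}}\Bigr)\|S\|_{H^1}^2 .
\]
We shrink the neighbourhood until $\|S\|_{H^{22}}\leq c_0/(4C)$. Then
\[
 h(g)\leq 2-\tfrac{c_0}{4}\|S\|_{H^1}^2\leq 2 .
\]
This estimate also settles equality. If $h(g)=2$, the display forces $\|S\|_{H^1}=0$, so $S=0$. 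Then $\psi^*g=G(0)=g_0$, and hence $g=(\psi^{-1})^*g_0$. Conversely, every pullback of $g_0$ has entropy $h(g_0)=2$.

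The point needing care is that one neighbourhood must serve uniformly. The constant $C$ and the ball on which Lemma~\ref{lem:entropy-remainder} holds have to be fixed first. Then the smooth neighbourhood of $g_0$ is chosen so that the gauge-fixing and implicit-function steps in Lemma~\ref{lem:constraint-coordinates} produce an $S$ lying in that ball (in $\TT^8$) and satisfying the $H^{22}$ smallness condition above. This works because the lemma gives continuous dependence of $S$ on $g$ in $C^{25,1/2}$, with $S=0$ at $g=g_0$.

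The main obstacle is the mismatch of norms. The cubic remainder is only controlled in $H^8$, while the coercivity is only in $H^1$. The interpolation built into \eqref{eq:uniform-remainder} resolves this: the loss falls entirely on the factor $\|S\|_{H^{22}}$, which is small but need not be controlled by $\|S\|_{H^1}$. Because of this the argument needs no further work.
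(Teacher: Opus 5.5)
Your proposal is correct and is essentially the paper's own proof. You gauge-fix with Lemma~\ref{lem:constraint-coordinates}, combine the coercive bound of Proposition~\ref{cor:coercive-3d} with the remainder estimate \eqref{eq:uniform-remainder}, shrink the neighbourhood so that $C\|S\|_{H^{22}}\leq c_0/4$, and deduce equality from $S=0$; your point about fixing $C$ and the ball before choosing the smooth neighbourhood is the one made in Remark~\ref{rem:logic}.
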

\begin{proof}
By Lemma~\ref{lem:constraint-coordinates}, write $\psi^*g=G(S)$ with
$S$ small in $H^{22}$. Entropy is invariant under pullback.
Proposition~\ref{cor:coercive-3d} and
Lemma~\ref{lem:entropy-remainder} imply
\[
 h(g)-2\leq\left(-\frac {c_0}2+C\|S\|_{H^{22}}\right)\|S\|_{H^1}^2
 \leq-\frac {c_0}4\|S\|_{H^1}^2
\]
after shrinking the neighborhood. Equality forces $S=0$, hence
$\psi^*g=G(0)=g_0$. Conversely, pullbacks of $g_0$ have entropy $2$.
\end{proof}

\begin{proposition}
\label{prop:conformal-reduction}
For smooth metrics $g$ sufficiently close to $g_0$, there is a positive
function $u_g$ close to $1$ such that
\[
 \widehat g=u_g^4g,\qquad \Sc_{\widehat g}=-6.
\]
The assignment $g\mapsto\widehat g$ is continuous in the smooth topology
and fixes $g_0$. If $\Sc_g\geq-6$, then
\[
 u_g\leq1,\qquad \widehat g\leq g,\qquad h(g)\leq h(\widehat g).
\]
Consequently, Question~\ref{q:KSX-local-entropy} is equivalent to the
existence of a neighborhood of $g_0$ on which $h\leq2$ for metrics
in $\Met_{-6}$.
\end{proposition}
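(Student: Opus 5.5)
The plan is to solve the Yamabe-type equation by the implicit function theorem, to get $u_g\leq1$ from the maximum principle, and to get the entropy comparison from monotonicity of entropy under metric domination.

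\emph{Existence and continuity.} In dimension three, $\widehat g=u^4g$ has
\[
 \Sc_{\widehat g}=u^{-5}\bigl(8\Delta_g u+\Sc_g u\bigr),
\]
with $\Delta_g\geq0$ as in the paper. So the condition $\Sc_{\widehat g}=-6$ is equivalent to
\[
 \Psi(g,u):=8\Delta_gu+\Sc_gu+6u^5=0.
\]
We have $\Psi(g_0,1)=0$. The linearization in $u$ at $(g_0,1)$ is
\[
 8\Delta+\Sc_{g_0}+30=8\Delta+24,
\]
which is an isomorphism $C^{k+2,\alpha}\to C^{k,\alpha}$ for every $k$. The implicit function theorem, applied for each $k$ and combined with local uniqueness (the solutions for different $k$ agree), gives a smooth positive $u_g$ near $1$. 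This $u_g$ depends continuously on $g$ in each $C^{k,\alpha}$ topology, hence in the smooth topology. Uniqueness gives $u_{g_0}=1$.

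\emph{The inequality $u_g\leq1$.} Suppose $\Sc_g\geq-6$. At a maximum point $p$ of $u=u_g$ we have $\Delta_gu(p)\geq0$. The equation then gives
\[
 6u^5(p)=-8\Delta_gu(p)-\Sc_g(p)u(p)\leq6u(p),
\]
so $u(p)^4\leq1$. Since $u>0$, this gives $\max u\leq1$. Hence $\widehat g=u^4g\leq g$ pointwise as quadratic forms.

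\emph{Entropy comparison.} Lift both metrics to the universal cover. Since $\widehat g\leq g$, lengths of curves satisfy $L_{\widehat g}\leq L_g$, so $d_{\widehat g}\leq d_g$ and $B_g(\widetilde x,R)\subset B_{\widehat g}(\widetilde x,R)$. I would use the orbit-growth characterization of $h$ recalled in the introduction, namely the exponential growth rate of $\#\{\gamma:d(\widetilde x,\gamma\widetilde x)\leq R\}$. The count for $g$ is at most the count for $\widehat g$, so $h(g)\leq h(\widehat g)$. This route avoids comparing the volume forms directly, although that comparison also works because $dV_{\widehat g}=u^6dV_g$ with $u\geq c>0$ is a bounded distortion.

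\emph{Equivalence with the question.} Suppose first that $h\leq2$ on $\Met_{-6}\cap\mathcal V$ for some neighborhood $\mathcal V$ of $g_0$. By continuity of $g\mapsto\widehat g$ and $\widehat{g_0}=g_0$, there is a neighborhood $\mathcal U$ with $\widehat g\in\mathcal V$ for all $g\in\mathcal U$. For $g\in\mathcal U$ with $\Sc_g\geq-6$ this gives $h(g)\leq h(\widehat g)\leq2$. Conversely, $\Met_{-6}\subset\{\Sc_g\geq-6\}$, so an affirmative answer to Question~\ref{q:KSX-local-entropy} gives the bound on $\Met_{-6}$.

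The only delicate point I expect is making the smooth-topology continuity precise across all regularity levels. Local uniqueness in the implicit function theorem handles this: the $C^{k,\alpha}$ solutions for different $k$ all coincide on a common neighborhood, so a single function $u_g$ is continuous in every $C^k$ norm.
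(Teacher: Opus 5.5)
Your proposal is correct and follows the paper's proof: the implicit function theorem for $8\Delta_gu+\Sc_gu+6u^5=0$ with linearization $8(\Delta+3)$, the maximum principle at a maximum of $u$, and $d_{\widehat g}\le d_g$ on the universal cover. The one difference is that you count deck-orbit points where the paper compares ball volumes using $dV_g\le C\,dV_{\widehat g}$; both are valid.

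One small repair is needed. Local uniqueness across regularity levels does not by itself make $u_g$ smooth for every smooth $g$ in a fixed neighborhood, because the implicit-function neighborhoods may shrink as $k$ grows. As the paper does, add an elliptic (Schauder) bootstrap in $8\Delta_gu=-\Sc_gu-6u^5$.
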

\begin{proof}
Fix $k\geq4$ and $0<\alpha<1$. The conformal scalar-curvature
formula reduces the normalization to
\[
 F(g,u):=8\Delta_g u+\Sc_g u+6u^5=0,
\]
where $\Delta_g$ is the nonnegative Laplacian. At $(g_0,1)$,
\[
 D_uF(g_0,1)=8(\Delta_{g_0}+3),
\]
an isomorphism from $C^{k,\alpha}$ to $C^{k-2,\alpha}$.
The implicit function theorem gives a unique solution near $1$,
depending continuously on $g$ in $C^{k,\alpha}$. It is positive after
shrinking the neighborhood. Elliptic regularity makes the solution
smooth for smooth $g$. Applying the same argument at higher regularity,
and using uniqueness to identify the solutions, gives continuity in
the smooth topology.

Suppose $\Sc_g\geq-6$. At a maximum point of $u=u_g$,
$\Delta_g u\geq0$, so
\[
 -6u^5=8\Delta_g u+\Sc_g u\geq-6u.
\]
Thus $u\leq1$ everywhere. To compare entropies, lift both metrics
to the universal cover. Since $\widehat g\leq g$, their balls satisfy
$B_g(x,R)\subset B_{\widehat g}(x,R)$. Compactness of $M$ gives
$dV_g\leq C\,dV_{\widehat g}$ for a constant independent of $R$.
Consequently
\[
 \Vol_g B_g(x,R)\leq C\Vol_{\widehat g}B_{\widehat g}(x,R).
\]
Taking logarithms, dividing by $R$, and passing to the limit proves
$h(g)\leq h(\widehat g)$. Finally, continuity of the normalization
allows its image to lie in any prescribed neighborhood of $g_0$.
Thus a bound on the constant-scalar-curvature metrics implies the
bound under $\Sc_g\geq-6$. The converse follows by restriction
to $\Sc_g=-6$.
\end{proof}

\begin{proof}[Proof of Corollary~\ref{cor:local-entropy-curves}]
Shrink the neighborhood so that the conformal normalization
$\widehat g=u^4g$ in Proposition~\ref{prop:conformal-reduction} lies in
the neighborhood of Proposition~\ref{prop:constrained-local-maximum}.
If $\Sc_g\geq-6$, these propositions give
\[
 h(g)\leq h(\widehat g)\leq2.
\]
To determine equality, put $v=1-u\geq0$. The conformal equation gives
\[
 8\Delta_g v+6u(1+u+u^2+u^3)v=(\Sc_g+6)u\geq0.
\]
The zero-order coefficient is positive. Since $M$ is connected, the
strong maximum principle implies that either $v\equiv0$ or $v>0$
everywhere. In the latter case, compactness gives $\rho=\max_M u<1$.
Then $\widehat g\leq\rho^4g$, so metric comparison and entropy scaling
give
\[
 h(g)\leq\rho^2h(\widehat g)\leq2\rho^2<2.
\]
Thus $h(g)=2$ forces $u=1$, and the equality statement in
Proposition~\ref{prop:constrained-local-maximum} makes $g$ a pullback of
$g_0$. The converse follows from diffeomorphism invariance.
\end{proof}

\begin{remark}\label{rem:logic}
Choose the neighborhood so that $C\|S\|_{H^{22}}<c_0/4$ after fixing
the gauge. By Lemma~\ref{lem:entropy-remainder}, the Taylor remainder
is then bounded by $(c_0/4)\|S\|_{H^1}^2$, uniformly for all $S$
in this neighborhood.
\end{remark}

\section{Codazzi tensors and higher-dimensional saddles}
\label{hd:codazzi-section}

Equality in the TT lower bound characterizes trace-free Codazzi tensors
in every dimension. Their Hessian coefficient is negative for $n=3$
and positive for $n\geq4$.

\begin{corollary}\label{hd:codazzi-sign}
For a nonzero trace-free Codazzi tensor $S$,
\begin{equation*}
 \cH(S,S)=\frac{(n-1)(n-2)}{8}
       \bigl(K_n-\frac{n+2}{n-1}\bigr)\|S^\vee\|^2_{L^2(SM,m_0)}.
\end{equation*}
This is negative for $n=3$ and positive for every $n\geq4$.
For $n=4$,
\[
 \cH(S,S)=\frac1{10}\|S^\vee\|^2_{L^2(SM,m_0)}
        =\frac{1}{120V_0}\|S\|^2_{L^2(M)}.
\]
\end{corollary}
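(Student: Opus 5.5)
By Proposition~\ref{hd:bochner}, a nonzero trace-free Codazzi tensor $S$ lies in $\TT$ and satisfies $\Delta S=nS$. So it is a TT eigentensor with $\lambda=n$, and we can read off its Hessian from \eqref{hd:main-mode} in Theorem~\ref{hd:main} at $\lambda=n$. Then $\lambda-2=n-2$, and $\kappa_n(n)=K_n$ by \eqref{hd:bottom-value}, since at $\lambda=n$ we have $\mu^2=\bigl((n-1)^2+8\bigr)/4-n=\bigl((n-3)/2\bigr)^2$. Thus $\mu=(n-3)/2$ and $a\pm\mu\in\{n,3\}$. This gives the displayed formula directly.

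For the sign, note that $\|S^\vee\|_{L^2(SM,m_0)}>0$ for $S\ne0$ by \eqref{hd:fiber}. Also $(n-1)(n-2)/8>0$ for $n\geq3$. The sign is therefore that of $K_n-(n+2)/(n-1)$, equivalently that of $R_n-1$ with $R_n=(n-1)K_n/(n+2)$.
\begin{itemize}
\item For $n=3$, we have $K_3=15\pi^2/64\approx2.313<5/2$, so the sign is negative.
\item For $n\geq4$, the recurrence \eqref{hd:bottom-recurrence} gives $R_4=16/15>1$ and $R_5=15\pi^2/128>1$. The ratio $R_{n+2}/R_n>1$ then gives $R_n>1$ for all $n\geq4$ by induction along each parity class, so the sign is positive.
\end{itemize}

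For $n=4$, the explicit value comes from $K_4=\tfrac{16}{15}\cdot\tfrac{6}{3}=\tfrac{32}{15}$. The coefficient is $\tfrac{3\cdot2}{8}\bigl(\tfrac{32}{15}-2\bigr)=\tfrac34\cdot\tfrac{2}{15}=\tfrac1{10}$. To pass to the $L^2(M)$ norm, apply \eqref{hd:fiber} with $A=S$, $n=4$: $\|S^\vee\|^2_{L^2(SM,m_0)}=\tfrac{2}{24V_0}\|S\|^2_{L^2(M)}=\tfrac{1}{12V_0}\|S\|^2_{L^2(M)}$. Multiplying by $\tfrac1{10}$ gives $\tfrac1{120V_0}\|S\|^2_{L^2(M)}$.

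The only step needing care is the evaluation of $J$ values. One can check $K_4$ independently of the recurrence: $J(3)=\pi/2$, $J(4)=4/3$, and $J(7)=16\pi/35$, so $K_4=(\pi/2)(4/3)\cdot35/(16\pi)=35/24$. This disagrees with $32/15$, so I would recompute with care. Using $J(z)=\sqrt\pi\,\Gamma(z/2)/\Gamma((z+1)/2)$ gives $J(4)=\sqrt\pi\cdot1/\Gamma(5/2)=4/3$ and $J(7)=\sqrt\pi\,\Gamma(7/2)/\Gamma(4)=\sqrt\pi\cdot\tfrac{15\sqrt\pi}{8}/6=\tfrac{5\pi}{16}$. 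Hence $K_4=(\pi/2)(4/3)\cdot16/(5\pi)=32/15$, consistent with $R_4=\tfrac35\cdot\tfrac{32}{15}\cdot\tfrac{5}{2}\cdot\ldots$; in fact $R_4=3K_4/6=16/15$, which checks. So the main obstacle is only bookkeeping of the Gamma values, and the final computation should be verified against $R_4=16/15$.
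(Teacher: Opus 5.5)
Your proof is correct and is essentially the paper's own argument. Proposition~\ref{hd:bochner} gives $\lambda=n$; then you use \eqref{hd:main-mode} with $\kappa_n(n)=K_n$ from \eqref{hd:bottom-value}, get the sign from $K_3<5/2$ and the recurrence \eqref{hd:bottom-recurrence}, and use \eqref{hd:fiber} for the $n=4$ conversion to $\|S\|_{L^2(M)}^2$. Your corrected values $J(7)=5\pi/16$ and $K_4=32/15$ are right, so delete the erroneous first evaluation $J(7)=16\pi/35$ from the write-up.
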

\begin{proof}
Proposition~\ref{hd:bochner} gives $\lambda=n$. Apply
\eqref{hd:main-mode}, \eqref{hd:bottom-value}, and
\eqref{hd:bottom-recurrence}. In dimension four $K_4=32/15$;
\eqref{hd:fiber} gives the second expression.
\end{proof}

\subsection{From bending cohomology to a TT tensor}

\Needspace{8\baselineskip}
\begin{proposition}\label{hd:bending-input}
For each $n\geq3$ there are closed oriented hyperbolic manifolds
$M=\Gamma\backslash\mathbb H^n$ with
\[
 H^1(\Gamma;\mathbb R^{n,1})\ne0,
\]
where $\Gamma$ acts through its standard hyperbolic holonomy.
Here $H^1$ denotes group cohomology with the indicated coefficients.
These examples can be obtained by bending along embedded totally geodesic
hypersurfaces, viewing the holonomy in $SO_0(n+1,1)$.
\end{proposition}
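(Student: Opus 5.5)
We will follow Johnson--Millson: produce arithmetic hyperbolic manifolds containing an embedded, two-sided, totally geodesic closed hypersurface, and exhibit an explicit nontrivial cocycle coming from bending. The existence of manifolds with such hypersurfaces is classical. Take the standard arithmetic lattices of the simplest type, defined by the quadratic form
\[
 -\sqrt2\,x_0^2+x_1^2+\cdots+x_n^2
\]
over $\mathbb Q(\sqrt2)$. A congruence torsion-free subgroup $\Gamma$ contains the stabilizer $\Gamma_N$ of the hyperplane $\{x_n=0\}$ as a lattice in $O(n-1,1)$. Passing to a finite-index subgroup, using separability of $\Gamma_N$ (Bergeron, or the Millson argument via congruence subgroups), we can arrange that $N=\Gamma_N\backslash\mathbb H^{n-1}$ embeds in $M$ as a two-sided hypersurface. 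Passing to a further index-two subgroup if needed makes $M$ orientable.

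With $N$ in hand, we split $\Gamma$ according to whether $N$ separates. Then $\Gamma=\Gamma_1*_{\Gamma_N}\Gamma_2$ or $\Gamma=\Gamma_1*_{\Gamma_N}$ (an HNN extension). Embed $SO_0(n,1)\subset SO_0(n+1,1)$ as the stabilizer of a spacelike unit vector $e_{n+1}$. The centralizer of $\Gamma_N$ in $\mathfrak{so}(n+1,1)$ contains the infinitesimal rotation $X$ in the plane spanned by $e_n$ (normal to $N$) and $e_{n+1}$. This $X$ is nonzero and centralizes $O(n-1,1)$.

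Next we define the bending cocycle $c\colon\Gamma\to\mathfrak{so}(n+1,1)$ and show it is nontrivial. In the amalgam case, set $c=0$ on $\Gamma_1$ and $c(\gamma)=X-\operatorname{Ad}(\gamma)X$ on $\Gamma_2$. These agree on $\Gamma_N$, so $c$ is well defined. In the HNN case, set $c=0$ on $\Gamma_1$ and $c(t)=X$ for the stable letter $t$. The Lie algebra decomposes under $SO(n,1)$ as
\[
 \mathfrak{so}(n+1,1)=\mathfrak{so}(n,1)\oplus\mathbb R^{n,1},
\]
and $X$ lies in the $\mathbb R^{n,1}$ summand (it corresponds to the vector $e_n$). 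Since the action of $\Gamma$ preserves this splitting, the cocycle takes values in $\mathbb R^{n,1}$ with the standard holonomy action.

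Nontriviality is the main obstacle, and we would argue as Johnson--Millson do. Suppose $c=\delta v$ for some $v\in\mathbb R^{n,1}$. Restricting to $\Gamma_1$ gives $v-\gamma v=0$ for all $\gamma\in\Gamma_1$. But $\Gamma_1$ is Zariski dense in $SO(n,1)$, because it contains the lattice $\Gamma_N$, and $\Gamma_N$ fixes only the line spanned by $e_n$ in $\mathbb R^{n,1}$. Moreover $\Gamma_1$ is strictly larger than $\Gamma_N$ and is nonelementary off $N$, so it has no nonzero fixed vector. Hence $v=0$. In the amalgam case this forces $X-\operatorname{Ad}(\gamma)X=0$ for all $\gamma\in\Gamma_2$, i.e.\ $\Gamma_2$ fixes $e_n$. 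That contradicts Zariski density of $\Gamma_2$, whose limit set is not contained in $\partial\mathbb H^{n-1}$. The HNN case gives $c(t)=X\neq0$ directly.

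Equivalently, the cocycle is nontrivial because it is a nonzero first-order deformation of the holonomy into $SO_0(n+1,1)$, transverse to conjugation. For precise statements we cite \cite{JM}.
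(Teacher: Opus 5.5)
Your proposal is correct and takes the same route as the paper, which simply cites Johnson--Millson for the arithmetic construction of an embedded two-sided totally geodesic hypersurface and the resulting nonzero bending class in $H^1(\Gamma;\mathbb R^{n,1})$. One clause needs repair: containing $\Gamma_N$ does not by itself make $\Gamma_1$ Zariski dense, but your conclusion survives, because every $\Gamma_1$-fixed vector lies in the $\Gamma_N$-fixed line $\mathbb R e_n$, while $\operatorname{Stab}_\Gamma(e_n)=\Gamma_N\subsetneq\Gamma_1$ since $N$ is embedded and two-sided.
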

Johnson--Millson prove this using bending and arithmetic constructions
(see \cite{JM}*{Sections~5 and~7}). For a closed hyperbolic manifold of
dimension $n\geq3$, Weil's infinitesimal rigidity theorem gives
$H^1(\Gamma;\mathfrak{so}(n,1))=0$. The decomposition of
$\Gamma$-modules
\[
 \mathfrak{so}(n+1,1)=\mathfrak{so}(n,1)\oplus\mathbb R^{n,1}
\]
therefore identifies infinitesimal deformations of the holonomy in
$SO_0(n+1,1)$ with $H^1(\Gamma;\mathbb R^{n,1})$.
Bending along an embedded two-sided totally geodesic hypersurface
gives a nonzero class in this standard module. Such closed manifolds
exist in every dimension $n\geq3$.

The relation between this cohomology and Codazzi tensors is described
for hyperbolic surfaces by Bonsante--Seppi (see \cite{BS}). We give the
flat-bundle argument in all dimensions needed here.

\begin{theorem}\label{hd:flat-bundle}
On a closed hyperbolic manifold $M=\Gamma\backslash\mathbb H^n$,
\[
 \{\hbox{trace-free Codazzi symmetric two-tensors}\}
 \simeq H^1(\Gamma;\mathbb R^{n,1}).
\]
\end{theorem}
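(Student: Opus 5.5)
Consider the flat vector bundle $E=\Gamma\backslash(\mathbb H^n\times\mathbb R^{n,1})$ associated to the holonomy. By de Rham's theorem for local systems, $H^1(\Gamma;\mathbb R^{n,1})$ is the first cohomology of the twisted de Rham complex $(\Omega^\bullet(M;E),d_D)$, where $D$ is the flat connection. The plan is to split $E$ using the hyperboloid model and read off $d_D$ in terms of Levi-Civita data. Identify $\mathbb H^n$ with the upper hyperboloid in $\mathbb R^{n,1}$, with position vector field $x$. Then the splitting $\mathbb R^{n,1}=\mathbb R x\oplus T_x\mathbb H^n$ is $\Gamma$-equivariant, so $E\cong\underline{\mathbb R}\oplus TM$. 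In this splitting the flat connection is the familiar one:
\[
 D_X(f,Y)=\bigl(Xf+\langle X,Y\rangle,\ \nabla_XY+fX\bigr),
\]
since $\partial_X x=X$ and the normal component of the ambient derivative of a tangent field is $\langle X,Y\rangle x$. Flatness of $D$ is equivalent to curvature $-1$.

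An $E$-valued one-form is a pair $(\alpha,B)$, with $\alpha$ a one-form and $B\in\Omega^1(M;TM)$. Lowering indices, $B$ is a two-tensor $B_{ij}$. The closedness condition $d_D(\alpha,B)=0$ then splits into two equations. The scalar component reads $d\alpha+B^{\rm skew}$-type terms $=0$, precisely $d\alpha(X,Y)+B(X,Y)-B(Y,X)=0$. The tangent component reads $d_\nabla B+\alpha\wedge\mathrm{Id}=0$. Exact forms are $d_D(f,Y)=(df+Y^\flat,\ \nabla Y+f\,\mathrm{Id})$. Using the exact term with $Y=-\alpha^\sharp$ and $f=0$, every class has a representative with $\alpha=0$. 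For such a representative, the first equation forces $B$ symmetric, and the second says that $B$ is Codazzi. The remaining freedom is $(f,Y)$ with $df+Y^\flat=0$, i.e.\ $Y=-\nabla f$, which changes $B$ by $-\Hess f+fg_0$. Hence
\[
 H^1(\Gamma;\mathbb R^{n,1})\cong\{\text{symmetric Codazzi }B\}/\{\Hess f-fg_0: f\in C^\infty(M)\}.
\]
One checks directly that $\Hess f-fg_0$ is Codazzi at curvature $-1$; this is the same flatness computation.

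The main step is to choose a trace-free representative in each class and to show that it is unique. The trace of $B-(\Hess f-fg_0)$ is $\tr B+\Delta f+nf$, using the nonnegative Laplacian $\Delta$. Since $\Delta+n$ is invertible, there is a unique $f$ making this trace zero. So each class contains exactly one trace-free element of the form $B-(\Hess f-fg_0)$. In particular, a trace-free Codazzi tensor of the form $\Hess f-fg_0$ has $(\Delta+n)f=0$, hence $f=0$. This gives injectivity of the map from trace-free Codazzi tensors to cohomology. Surjectivity follows from the same trace normalization. Smoothness of $f$ follows from elliptic regularity.

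I expect the main obstacle to be bookkeeping rather than analysis. The signs in $D$ and in the two components of $d_D$ must be fixed consistently, so that the Codazzi condition and the gauge term $\Hess f-fg_0$ come out exactly, with no spurious curvature term. I would verify these by a direct computation on the hyperboloid before assembling the argument. Proposition~\ref{hd:bochner} then shows that the resulting tensors are the TT eigentensors with $\lambda=n$, which is how the result feeds into Corollary~\ref{hd:saddle-intro}.
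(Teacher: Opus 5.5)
Your proposal is correct and follows the paper's proof essentially step for step: the same flat connection on $TM\oplus\mathbb R$ (which you additionally derive from the hyperboloid model), the gauge $Y=-\alpha^\sharp$ to kill the scalar part, the residual freedom $-\Hess f+fg_0$, and invertibility of $\Delta+n$ for both existence and uniqueness of the trace-free representative. The only blemish is a convention-dependent sign in the tangent component of $d_D(\alpha,B)$: with $(\alpha\wedge\mathrm{Id})(X,Y)=\alpha(X)Y-\alpha(Y)X$ it reads $d_\nabla B-\alpha\wedge\mathrm{Id}$, which is harmless because you only use it after setting $\alpha=0$.
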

\begin{proof}
The flat bundle associated with the standard holonomy is
$E=TM\oplus\mathbb R$, with connection
\begin{equation*}
 D_X(Y,s)=(\nabla_XY+sX,\ X(s)+g_0(X,Y)).
\end{equation*}
The curvature $-1$ identity gives $D^2=0$, and $D$ preserves the
Lorentz form $|Y|^2-s^2$.  Since the universal cover of $M$ is
contractible, flat de Rham theory gives
\[
 H^1_{\mathrm{dR}}(M;E)
 \simeq H^1(\Gamma;\mathbb R^{n,1}),
\]
where $H^1_{\mathrm{dR}}(M;E)$ denotes the $D$-closed $E$-valued
one-forms modulo $D$-exact forms.  We identify endomorphisms with
two-tensors using $g_0$.  A symmetric Codazzi tensor
$A$ defines a closed form $(A,0)$; we prove that each cohomology
class has a unique such representative with $\tr A=0$.

Let $\alpha(X)=(A(X),b(X))$ be a closed form.
The scalar component of $D(b^\sharp,0)$ is $b$, so subtracting
this exact form gives a representative $(A,0)$, with $A$ renamed.
The scalar and tangent components of its closedness condition give
\[
 g_0(A(X),Y)=g_0(X,A(Y)),\qquad
 (\nabla_XA)(Y)=(\nabla_YA)(X).
\]
Thus $A$ is symmetric and Codazzi.  To remove its trace, observe that
\[
 D(\nabla f,-f)(X)=((\Hess f-f\operatorname{Id})(X),0).
\]
The trace of its tangent component is $-(\Delta+n)f$, since
$\Delta=-\tr\Hess$ on functions.  Solve
$-(\Delta+n)f=\tr A$, using invertibility of $\Delta+n$ on the
closed manifold $M$.  Subtracting this exact form removes the trace.
The new representative is still closed and has scalar component zero,
so it is still symmetric and Codazzi.

To prove uniqueness, suppose a trace-free representative is exact:
$(A,0)=D(Y,s)$.  Its scalar component gives $Y=-\nabla s$,
so $A=-\Hess s+s\operatorname{Id}$.  Taking the trace gives
$(\Delta+n)s=0$, hence $s=0$ and $A=0$.
This proves the isomorphism.
\end{proof}

\begin{proof}[Proof of Corollary~\ref{hd:saddle-intro}]
Choose a nonzero trace-free Codazzi tensor $S^+$. Its Hessian is positive
by Corollary~\ref{hd:codazzi-sign}. Choose any TT eigentensor $S^-$
with eigenvalue greater than $\lambda_*(n)$; its Hessian is negative.
Lemma~\ref{lem:integrability} integrates both tensors to constrained curves.
Their first entropy variations vanish, and therefore
\[
 h(g_s^\pm)=n-1+\frac{s^2}{2}\cH(S^\pm,S^\pm)+o(s^2).
\]
This gives the two strict inequalities for sufficiently small nonzero
$s$. Proposition~\ref{hd:bending-input} and
Theorem~\ref{hd:flat-bundle} provide such manifolds in every dimension
$n\geq4$. The curves are smooth and negatively curved near zero.
\end{proof}

The entropy-increasing curves have constant scalar curvature and
leave the space of hyperbolic metrics, since
\eqref{eq:ricci-variation} gives
\[
 D\bigl(\Ric+(n-1)g\bigr)_{g_0}(S)
 =\frac12(\Delta-2)S=\frac{n-2}{2}S
\]
on a Codazzi tensor, which is nonzero for $n\geq3$.
The constrained curve has positive volume acceleration
\eqref{eq:volume-second}. Volume normalization contributes the second,
negative term in \eqref{eq:main-hessian}.

For a manifold whose lowest TT eigenvalue is larger than $\lambda_*(n)$,
the Hessian is instead negative definite. If the lowest eigenvalue equals the threshold, there is a
finite-dimensional nullspace where the second variation does not decide the
behavior. A local maximum would require a remainder estimate in addition to
these second-variation inequalities.

\end{document}